\newtheorem{Theorem}{Theorem}[section]
\newtheorem{Lemma}{Lemma}[section]
\newtheorem{algorithm}{Algorithm}[section]
\newtheorem{remark}{Remark}[section]
\newcommand{\A}{\mathcal{A}}
\newcommand{\B}{\mathcal{B}}
\newcommand{\E}{\mathcal{E}}
\newcommand{\K}{\mathcal{N}}
\newcommand{\R}{\mathcal{R}}
\newcommand{\I}{\mathcal{I}}
\newcommand{\Label}[1]{\label{#1}\mbox{\fbox{\bf #1}\quad}}
\renewcommand{\Label}[1]{\label{#1}}
\newcommand{\Null}[1]{\mathcal{N}(#1)}
\renewcommand{\ker}[1]{\Null}
\newcommand{\vu}{{\bf{u}}}
\newcounter{mnote}
\let\oldmarginpar\marginpar
\renewcommand\marginpar[1]{\-\oldmarginpar[\raggedleft\footnotesize #1]%
  {\raggedright\footnotesize #1}}
\begin{document}

%%%%%%%%%%%%%%%%%%%%%%%%%%%%%%%%%%%%%%
\title[Scalable GAMG for Poisson]{A Scalable Auxiliary Space Preconditioner for High-Order Finite Element Methods}
%%%%%%%%%%%%%%%%%%%%%%%%%%%%%%%%%%%%%%

\author{Young-Ju Lee}
\thanks{Department of Mathematics, Rutgers, The State University of New Jersey,  NJ, 08901, USA. Email: leeyoung@math.rutgers.edu. Supported in part by NSF DMS-0915028 and the Startup fund from Rutgers University.}        

\author{Wei Leng}
\thanks{Institute of Computational Mathematics, Chinese Academy of Sciences, Beijing 100190, China. Email: wleng@lsec.cc.ac.cn.}

\author{Chen-song Zhang}
\thanks{Institute of Computational Mathematics, Chinese Academy of Sciences, Beijing 100190, China. Email: zhangcs@lsec.cc.ac.cn. Supported in part by NSF DMS-{0915153}.}

\date{April/10/2012}

\maketitle

\begin{abstract}
In this paper, we revisit an auxiliary space preconditioning method proposed by Xu~[Computing 56, 1996], in which low-order finite element spaces are employed as auxiliary spaces for solving linear algebraic systems arising from high-order finite element discretizations. We provide a new convergence rate estimate and parallel implementation of the proposed algorithm. We show that this method is user-friendly and can play an important role in a variety of Poisson-based solvers for more challenging problems such as the Navier--Stokes equation. We investigate the performance of the proposed algorithm using the Poisson equation and the Stokes equation on 3D unstructured grids. Numerical results demonstrate the advantages of the proposed algorithm in terms of efficiency, robustness, and parallel scalability.
\end{abstract}

%\begin{keywords} 
%Second-order elliptic equations, Stokes equation, high-order finite elements, Hood--Taylor finite elements, multigrid, parallel computing. 
%\end{keywords}

%\begin{AMS}35J05, 65N30, 65N55\end{AMS}

%!TEX root = main.tex

%%%%%%%%%%%%%%%%%%%%%%%%%%%%%%%%%%%%%%
\section{Introduction}\label{sec:intro}
%%%%%%%%%%%%%%%%%%%%%%%%%%%%%%%%%%%%%%

Iterative methods have been successfully applied to large-scale sparse linear systems arising from discretizations of partial differential equations (PDEs). Many linear systems of equations can be handled by preconditioned Krylov subspace methods \cite{Axelsson1994,Saad2003}. In fact, preconditioners play a crucial role in the convergence of iterative methods. The construction of an ``ideal" preconditioner depends on following basic, sometimes contradictory, guidelines: (1) {{Optimality and Robustness:}} The convergence rate of an appropriate iterative methods on the preconditioned system is uniform or nearly uniform, independent of mesh size or physical parameters; (2) {{Cost-effectiveness and Scalability:}} The computational costs and memory requirements of constructing and applying the preconditioning action are low and have good parallel scalability; and (3) {{User-friendliness:}} Users require little information and implementation is not difficult. 

%Though these efforts have yielded many mathematical optimal solvers such as the multigrid (MG) method, the reality is that these methods have not been much used in practical applications~\cite{Xu2010}. In this paper, we consider a preconditioner for the systems of equations that can be handled by the solution to the Poisson's equations, which are discretized by using the higher order finite elements. There are abundant examples that belong to this class of systems, such as Stokes equations and Linear Elasticity. 

%In fact, this method can be viewed as a (nonnested) two-level preconditioning technique based on a simple smoother and an auxiliary space. In fact, a large class of important partial differential equations in science and engineering is the second-order elliptic equation, among which the Poisson's equation is a representative example. 

The Poisson equation $-\Delta u = f$ and its variants arise in many applications. The geometric multigrid (GMG) method is one of the most efficient iterative methods for solving discrete Poisson or Poisson-like equations. A vast number of works have explored multigrid methods; references include the monographs and the survey papers~\cite{Brandt.A1984,Hackbusch.W1985,Bramble.J1993,Trottenberg2001}. Though the classical multigrid algorithm based on a geometric hierarchy can be an effective solver for a well-structured, it is usually very difficult to obtain such a hierarchy in practice. The algebraic multigrid (AMG) method~\cite{Brandt.A;McCormick.S;Ruge.J1984,Brandt.A1986,Stuben.K2000,Trottenberg2001,Brandt.A2002,Falgout2006}, on the other hand, requires minimal geometric information about the underlying problem and can sometimes be employed as a ``black-box'' iterative solver or preconditioner for other iterative methods. The version known as the Classical AMG~\cite{Brandt.A;McCormick.S;Ruge.J1984,Ruge1987} is used frequently and has been shown to be effective for a range of problems in practice. In an effort to render AMG methodologies more broadly applicable and to improve robustness, various versions have been developed; for example, see~\cite{Vanek.P;Mandel.J;Brezina.M1996,Xu.J;Zikatanov.L2003b,Muresan.Notay2008,Brannick.Falgout2010}.
%compatible relaxation method~\cite{Brandt.A2000,Livne2004,Brannick.J;Zikatanov.L2007,Brannick.Falgout2010} is a modified relaxation scheme which keeps the coarse-level variables invariant. 
%

AMG methods are readily applicable and potentially scalable for large 3D problems. Recently, parallel versions of multigrid methods have attracted a lot of attention (and will continue to do so) because of their fundamental role in modern computational mathematics and engineering; see \cite{Griebel.Metsch.Oeltz.Schweitzer:2005,Falgout.Jones.ea2005,Sterck.H;Falgout.R;Nolting.J;Yang.U2008,Bakera} and references therein for details. In this paper, we will not discuss parallelization and implementation of AMG. Throughout this paper, we employ the Parallel Modified Independent Set (PMIS) coarsening strategy~\cite{Sterck2006} and the Extend+i+cc interpolation~\cite{Sterck.H;Falgout.R;Nolting.J;Yang.U2008} in BoomerAMG of {\it hypre} package~\cite{hypre}, which has been numerically proven to be efficient and scalable~\cite{Bakera}.

Although AMG methods have been proven effective for many problems, it is important to note that generally the performance of the Classical AMG method deteriorates for high-order finite elements (see Shu, Sun and Xu~\cite{Shu.S;Sun.D;Xu.J2006} for the 2D test examples). In Table~\ref{tab:Poisson-hypre}, we show a simple numerical experiment. It is easy to see that for about the same degree of freedom ($10^7$) the convergence rate of the AMG method (PMIS and Extend+i+cc) deteriorates. Furthermore, the performance of AMG is very sensitive to the strength threshold $\theta$ in the coarsening procedure (see \S\ref{sec:numer} for details). 
%Note that the strength threshold parameter is used to determine the coarse grid degrees of freedom in the algorithm of the algebraic multigrid methods. Typically, the larger the parameter, the smaller set of coarse grid degrees will be considered in the coarsening process. Namely, it is remarkable that the AMG method is not robust in general. 
On the other hand, it is clear that AMG can be also very effective for the discrete Poisson equations in relatively low order finite element spaces. 
\begin{table}[ht]
\renewcommand{\arraystretch}{1.2}
\centering\caption{\rm Number of iterations for the AMG preconditioned GMRES method. We solve the 3D Poisson equation with 64 processing cores (piecewise continuous Lagrangian finite element discretizations are applied, the stopping criterion is when the relative residual is less than $10^{-6}$, and DOF is the total degree of freedom.)} \label{tab:Poisson-hypre}\begin{tabular}{|cc|c|c|c|c|c|}
\cline{1-7}
%\rowcolor[gray]{.9} 
Element Type & DOF & $\theta = 0.25$ & $\theta = 0.5$ & $\theta = 0.7$ & $\theta = 0.8$ & $\theta=0.9$
\\ 
\cline{1-7}
$P^{1,0}$ & 13M & 5 & 5 & 8 & 10 & 13
\\
\cline{1-7}
$P^{2,0}$ & 13M & 6 & 7 & 10 & 13 & 17
\\
\cline{1-7}
$P^{3,0}$ & 13M & 8 & 10 & 12 & 15 & 18
\\
\cline{1-7}
$P^{4,0}$ & 12M & $>$500 & $>$500 & 17 & 18 & 22
\\
\cline{1-7}  
\end{tabular} 
\end{table}

Studies have proposed using a two-level approach to handle Poisson equations on the higher-order finite element spaces. Such an approach would consist of (1) a smoother for the Poisson equation on the higher order finite element spaces, (2) transfer operators between the higher-order finite element spaces and the lower-order finite element spaces, and (3) an AMG method applied to the Poisson equations defined on the lower-order finite element space. Shu, Sun and Xu~\cite{Shu.S;Sun.D;Xu.J2006} has designed an algebraic multigrid method by constructing lower order finite element coefficient matrices algebraically with the help of characteristics of Lagrangian finite element spaces. Their study is restricted to the quadratic and cubic Lagrangian finite element discretizations in 2D. Another attempt to use low-order finite element space for preconditioning can be found in Heys, et al.~\cite{Heys.Manteuffel.ea2005}. 

The solution technique to discrete Poisson equations is itself of great interest. However, even more compelling are Poisson-based solution techniques that can be applied to constructing an efficient solver for more complicated problems~\cite{Xu.J1996a}. Over the last few decades, intensive research has been devoted to developing efficient linear solvers for almost all kinds of sparse linear systems in scientific and engineering computing. The main idea of efficient preconditioning is to transform a seemingly intractable problem to a (sequence of) problem(s) that can be approximated rapidly. One such mathematical technique is a general framework called Auxiliary Space Preconditioning or ASP~\cite{Xu.J1996a,Xu2010}. This method represents a large class of preconditioners that (1) by using auxiliary spaces transform a complicated system into a sequence of simpler systems, and (2) construct efficient preconditioners with efficient solvers for these simpler systems. Based on fast Poisson solvers and analytic insight into PDEs or PDE systems, efficient solvers can be developed using the auxiliary space preconditioning framework for various cases that arise in practical computations. Successful examples include simple and complex fluid problems, linear elasticity, and $H(\mathop{grad})$, $H(\mathop{div})$, and $H(\mathop{curl})$ systems with applications to the Maxwell equations~\cite{Hiptmair.Xu2007,Xu2010,Lee2011a}. 

%%%%%%%%%%%%%%

In this paper, we revisit the algorithm proposed in~\cite{Xu.J1996a} for solving a large-scale discrete second-order elliptic equations by high-order finite element methods. Moreover, using easily available mesh information, we provide a parallel implementation of this auxiliary space preconditioner and analyze its performance for problems with about half a billion unknowns in terms of the robustness, efficiency, and scalability. This paper makes an additional contribution by providing an alternative proof for the convergence rate of the proposed algorithms. Lastly, the proposed method will be applied to solving the 3D Stokes equation on unstructured meshes. It is noteworthy  that this proposed preconditioner is user-friendly and can improve the robustness, efficiency, and scalability of the solution to the Stokes equation compared with pure AMG methods. This indicates that the proposed method can also make a useful building block for other Poisson-based solvers. 

%We provide analysis of the convergence rates of the new GAMG method in both successive and parallel subspace correction framework. We apply the GAMG method to construct preconditioners for the generalized Stokes problems and obtain scalable solvers; furthermore, we give numerical evidence on the scalability. 

Throughout this paper, we will use the following notation. The symbol $L^2_0$ denotes the space of all square integrable functions, $L^2$, whose entries have zero mean values. Let $H^k$ be the standard Sobolev space of the scalar function whose weak derivatives up to order $k$ are square integrable, and, let $\|\cdot\|_k$ and $|\cdot|_k$ denote the standard Sobolev norm and its corresponding seminorm on $H^k$, respectively. Furthermore, $\|\cdot\|_{k,\omega}$ and $|\cdot |_{k,\omega}$ denote the norm $\|\cdot\|_k$ and the semi-norm $|\cdot |_k$ restricted to the domain $\omega\subset\Omega$, respectively. We use the notation $X\lesssim(\gtrsim)Y$ to denote  the existence of a generic constant $C$, which depends only on $\Omega$, such that $X\leq (\geq) CY$.

The rest of the paper is organized as follows. In \S\ref{sec:algorithm}, using the auxiliary space preconditioning framework, we present the construction of the geometric hierarchy and a two-level method for the Poisson equation from high-order finite element discretizations. In \S\ref{sec:gamg} and \S\ref{sec:proof}, we analyze the convergence of the proposed two-level algorithm by casting it into the augmented matrix formulation by Griebel~\cite{Griebel.M1994}. In \S \ref{sec:mfem}, we discuss the preconditioning techniques for saddle point problems from the mixed finite element for the Stokes equation. In \S \ref{sec:numer}, a number of numerical experiments are reported and summarized to demonstrate the efficiency and robustness of our parallel implementation. 

%!TEX root = main.tex

%%%%%%%%%%%%%%%%%%%%%%%%%%%%%%%%%%%%%%
\section{A geometric--algebraic multigrid algorithm}\label{sec:algorithm}
%%%%%%%%%%%%%%%%%%%%%%%%%%%%%%%%%%%%%%

This section is devoted to present the algebraic multigrid methods for the Poisson equations discretized by the higher order finite element methods with geometric hierarchy between higher order finite elements and the lower order finite element spaces consisting of piecewise linear elements. 
%We shall simply introduce some basic ideas and algorithms since some detailed description can be found at Shu, Sun and Xu \cite{Shu.S;Sun.D;Xu.J2006}. 

Let $\Omega \subset \mathbb{R}^{d}$ be a bounded polyhedral domain. We consider the Poisson equation 
\begin{equation}\label{eqn:Poisson}
-\Delta u = f  \qquad \mbox{in } \Omega,
\end{equation}
subject to zero Dirichlet boundary condition $u = 0 \mbox{ on } \partial \Omega$.  We then consider the following weak formulation of \eqref{eqn:Poisson}: Find $u \in V = H^1_0(\Omega)$, such that
\begin{equation}\Label{main:eq}
a(u,v) = \langle f,v \rangle \qquad \forall\, v \in V,
\end{equation}
where 
\begin{equation}
a(u,v) := \int_\Omega \nabla u \cdot \nabla v \, dx \quad \mbox{and} \quad \langle f, v\rangle := \int_\Omega f\, v\, dx \qquad \forall\, u,v \in V. 
\end{equation}

We now discretize the equation (\ref{main:eq}) using the finite element method. To introduce the finite element spaces. Assume that $\mathcal{T}_h$ is a shape-regular triangular (tetrahedral) mesh of $\Omega$. For any $T\in\mathcal{T}_h$, let $P^k(T)$ be the set of polynomials on $T$ of degree less than or equal to $k$. We denote the piecewise continuous $P^k$ Lagrangian finite element space as $V_h:=P^{k,0}$. In this paper, $V_h$ denotes a finite element space consisting of the $k$-th ($k \geq 2$) order piecewise continuous polynomials, such as quadratic, cubic or quartic polynomials. That is to say
\begin{equation}
V_h := \{ v \in C(\Omega): v|_{T} \in P^k(T), \; \forall\, T \in \mathcal{T}_h \} = {\rm span}\{\phi_1,\ldots,\phi_{n_{_h}} \},
\end{equation}
where $n_{_h}$ is the total number of degrees of freedom and $\{\phi_i\}_{i=1,\ldots,n_h}$ are the standard $k$-th order Lagrange basis functions.
The discrete weak formulation of \eqref{main:eq} can be written as
\begin{equation}\Label{main:deq}
a(u_h,v_h) = \langle f,v_h \rangle \qquad \forall\, v_h \in V_h.
\end{equation}

We introduce an auxiliary space, the continuous piecewise linear polynomial space, 
\begin{equation}
V_H := \{ v \in C(\Omega): v|_{T} \in P^1(T), \; \forall\, T \in \mathcal{T}_h \} = {\rm span}\{\psi_1,\ldots,\psi_{n_{_H}}\}, 
\end{equation} 
where $\{\psi_j\}_{j=1,\ldots,n_H}$ are the canonical basis functions or the hat functions. We denote, by $\{x^h_i\}_{i=1, \ldots, n_{_h}}$ and $\{x^H_i\}_{i=1,\ldots,n_{_H}}$, the set of evenly-spaced nodes where the degree of freedom (DOF) for the Lagrange finite spaces $V_h$ and $V_H$ are defined, respectively. Figure~\ref{fig:tetrahedron} shows the local ordering of $x^h_i$ on a single simplex in 3D. 
\begin{figure}[h!!] %  figure placement: here, top, bottom, or page
\centering
%\subfigure{
%\includegraphics[width=0.24\linewidth] {}
%}
\subfigure{
\includegraphics[width=0.26\linewidth] {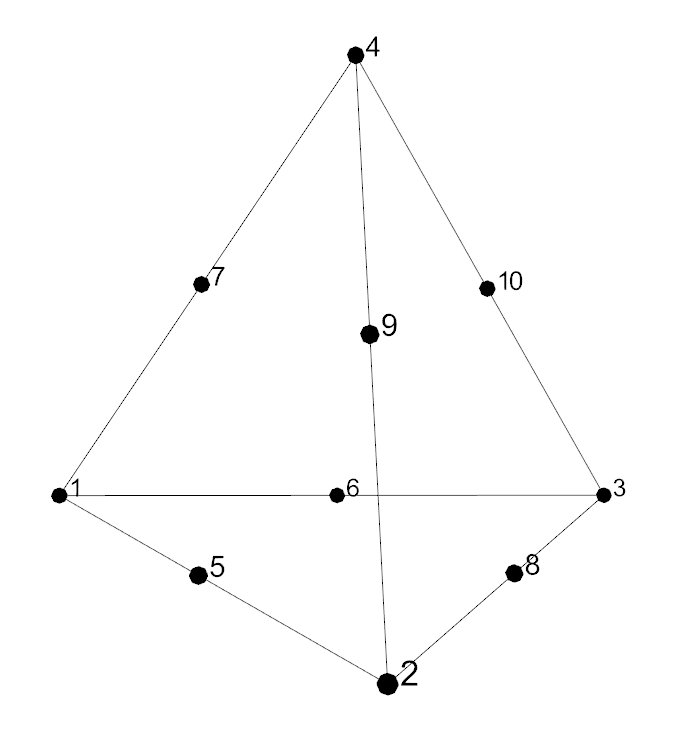}
}
\subfigure{
\includegraphics[width=0.31\linewidth] {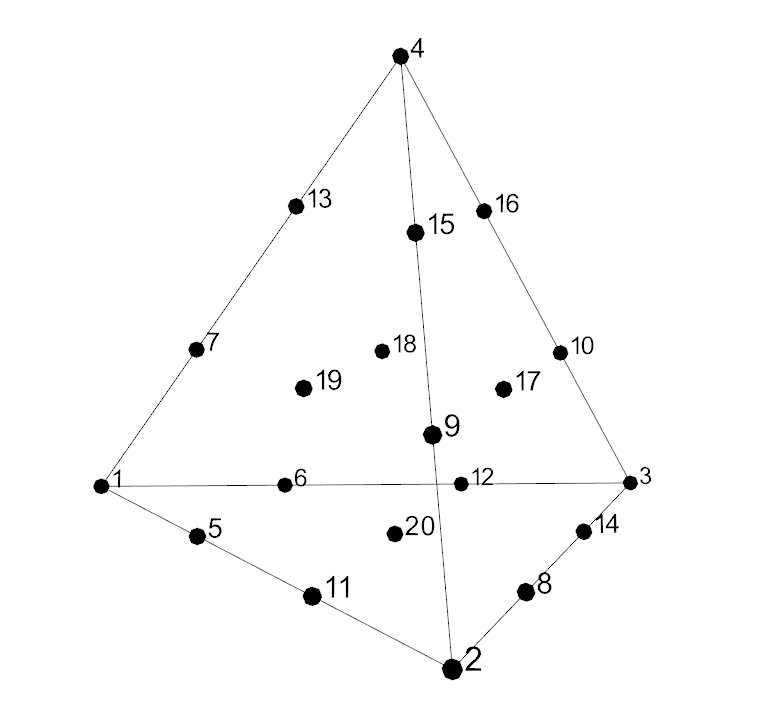}
}
\subfigure{
\includegraphics[width=0.31\linewidth] {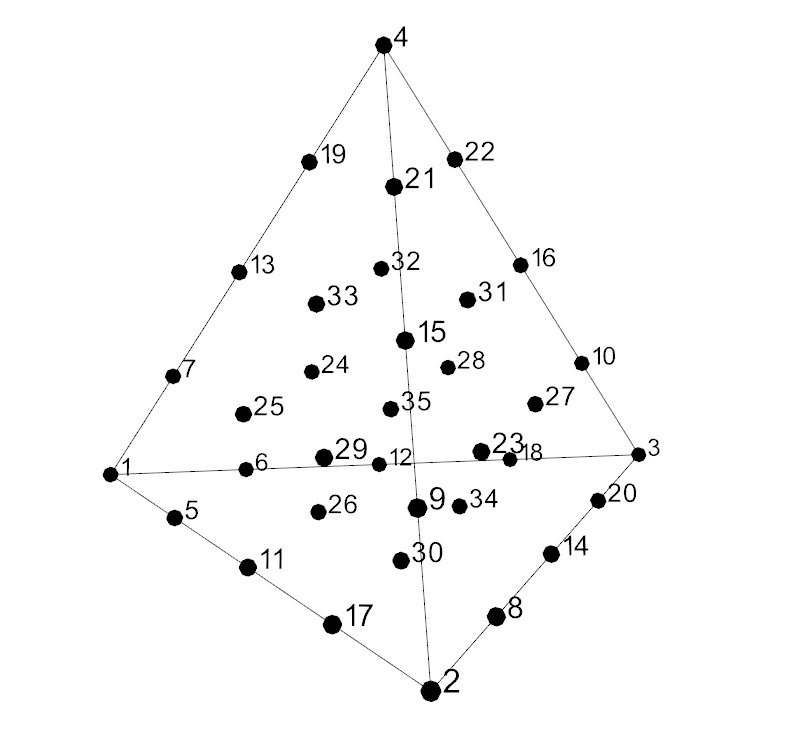}
}
\caption{Local numbering of nodes $x_i^h$'s in a single tetrahedron used for $P2$, $P3$ and $P4$ FEMs}\label{fig:tetrahedron}
\end{figure}

Throughout this paper, we use the following convention for the vector representation of a function in $V_h$: For any $v_h = \sum_{i=1}^{n_{_h}} v_i \phi_i \in V_h$ and $w_H = \sum_{i=1}^{n_{_H}} w_i \psi_i \in V_H$, we denote by ${\bf{v}}_h$ and ${\bf{w}}_{_H}$, the vector representation of $v_h$ and $w_h$, respectively. Namely, ${\bf{v}}_{_h} = (v_1,\ldots,v_{n_{_h}})^T$ and ${\bf{w}}_{_H} = (w_1,\ldots,w_{n_{_H}})^T$. Similarly, the symbol ${\bf{f}}_{_h}$ denotes $(f_1,\ldots,f_{n_{_h}})^T$ with $f_i = \langle f, \phi_i \rangle$ for all $i=1,\ldots,n_{_h}$. The equation (\ref{main:deq}) can then be cast into the following equivalent finite-dimensional linear system of equations: Find ${\bf{u}}_{_h}$ such that
\begin{equation}\Label{mat:eq}
\mathsf{A}_{h} {\bf{u}}_{_h} = {\bf{f}}_{_h},
\end{equation}
where $\mathsf{A}_{h}$ is a symmetric positive definite matrix with $(\mathsf{A}_{h})_{ij}=a(\phi_j, \phi_i)$.

We are now at the position to present a geometry-based algebraic multigrid (GAMG) method by using the auxiliary linear finite element space. For this purpose, we start by introducing the transfer operators between $V_h$ and $V_H$. 
Since $k \geq 2$, any basis function $\psi_j \in V_H$ can be represented by the basis functions $\{\phi_i\}_{1\leq i\leq n_{_h}}$ of the space $V_h$. Namely, for any $j =1, \ldots n_{_H}$, there exists ${\bf{c}}_j = (c_j^1,\ldots,c_j^{n_{_h}})^T \in \mathbb{R}^{n_{_h}}$ such that
\begin{equation}
\psi_j = \sum_{i=1}^{n_{_h}} c^i_j\phi_i.
\end{equation}
These coefficients $c_j^i$'s can be easily obtained by $c_j^i = \psi_j(x_i^h)$ for any $i=1,\ldots,n_{_h}$ and $j=1,\ldots,n_{_H}$. Hence, we have that
\begin{equation}
\psi_j = \sum_{i \in N_h^H(j)} \psi_j(x_i^h) \phi_i,
\end{equation}
where $N_h^H(j) = \{ i \in \{1,\ldots,n_{_h}\} : {\rm supp}(\phi_i) \cap {\rm supp} (\psi_j) \neq \emptyset \}$. We define the transfer operators
\begin{equation}\Label{Prol}
\mathsf{I}_{\mathsf{P}} = ({\bf{c}}_1,\ldots,{\bf{c}}_{n_{H}}) \in \mathbb{R}^{n_{_h} \times n_{_H}} \quad \mbox{ and } \quad \mathsf{I}_{\mathsf{R}} = \mathsf{I}_{\mathsf{P}}^T. 
\end{equation}
We note that the transfer operators $\mathsf{I}_{\mathsf{R}}$ and $\mathsf{I}_{\mathsf{P}}$ can be generated easily with the mesh $\mathcal{T}_h$ available. An alternate (algebraic) approach for generating them has been introduced by Shu, Sun, and Xu~\cite{Shu.S;Sun.D;Xu.J2006}.

Now, let $\mathsf{A}_{_h}$ and $\mathsf{A}_{_H}$ be the stiffness matrices defined by \eqref{main:deq} on the finite element spaces $V_h$ and $V_H$, respectively. Then we have the Galerkin relation $\mathsf{A}_{_H} = \mathsf{I}_{\mathsf{R}} \mathsf{A}_{_h} \mathsf{I}_{\mathsf{P}}$. Let $\mathsf{G}_h$ be a smoother and we can state the following two-level algorithm: 
\begin{algorithm}[A Two-level Method]\label{gamg}
Given an initial iterate, ${\bf{u}}_{_h}^0$ on the fine grid, we perform the following steps until convergence for $\ell=0,1,\ldots$

%{\bf{Step 0.}} {\it{Presmoothing}} 
%$$
%\overline{\bf{u}}_{_h} = {\bf{u}}_{_h}^\ell + \mathsf{G}_h^T({\bf{f}}_{_h} - \mathsf{A}_{h} {\bf{u}}_{_h}^\ell)
%$$

{\bf{Step 1.}} {\it{Solve the coarse grid equation}} 
$$
\mathsf{A}_{H} {\bf{w}}_{_H} = \mathsf{I}_{\mathsf{R}} ({\bf{f}}_{_h} - \mathsf{A}_{h} {\bf{u}}_{_h}^\ell)
$$

{\bf{Step 2.}} {\it{Correction}} 
$$
\widetilde{{\bf{u}}}_{_h} = {{\bf{u}}}_{_h}^\ell + \mathsf{I}_{\mathsf{P}} \, {\bf{w}}_{_H} 
$$

{\bf{Step 3.}} {\it{Postsmoothing}}
$$
{\bf{u}}_{_h}^{\ell+1} = \widetilde{{\bf{u}}}_{_h} + \mathsf{G}_h ({\bf{f}}_{_h} - \mathsf{A}_{h}\widetilde{{\bf{u}}}_{_h})
$$
\end{algorithm}

\begin{remark}[The Two-Level Method and GAMG]\label{rem:2-1vsGAMG}\rm
In Step 1 of Algorithm~\ref{gamg}, we can employ an AMG method or a few AMG cycles to solve the coarse-level problem approximately. We can also add a presmoothing step in front of Step 1 to make the method symmetric. This particular version of Algorithm~\ref{gamg} will then be referred to as the geometric-algebraic multigrid (GAMG) method as easily available geometric information (mesh) is used in our implementation of Step 3. 
\end{remark}

%!TEX root = main.tex

%%%%%%%%%%%%%%%%%%%%%%%%%%%%%%%%%%%%%%
\section{Augmented matrix formulation of the two-level method}\label{sec:gamg}
%%%%%%%%%%%%%%%%%%%%%%%%%%%%%%%%%%%%%%

In this section, we introduce an equivalent form of Algorithm~\ref{gamg} in terms of the augmented algebraic systems based on so-called the redundant representation of the solution space. This observation was originally made by Griebel \cite{Griebel.M1994}, however, the convergence rate estimate in this framework has not been seen in literature. The main idea lies in the following redundant representation of the functions in the space $V_h$: For any $v_{_h} \in V_h$, we have the representation
\begin{equation}\Label{Rep:Aug}
v_{_h} = \sum_{i=1}^{n_{_H}} \bar{v}_i \psi_i + \sum_{i=1}^{n_{_h}} \widehat{v}_i \phi_i.
\end{equation}

We notice that the above representation is not unique since the basis functions used in the representation are not independent, which is why \eqref{Rep:Aug} is also called the redundant representation of functions in $V_h$. Based on \eqref{Rep:Aug}, we can consider the discrete weak formulation: Let $\widetilde{V}_h = {\rm span} \{\psi_1,\ldots,\psi_{n_{_H}}, \phi_1,\ldots, \phi_{n_{_h}}\}$ and find $v_{_h} = \sum_{i=1}^{n_{_h}} \widehat{v}_i \phi_i + \sum_{i=1}^{n_{_H}} \bar{v}_i \psi_i \in \widetilde{V}_h$,  such that 
\begin{equation}\label{weakaug}
a(v_{_h}, w_h) = \langle f, w_h \rangle \qquad \forall\, w_h \in \widetilde{V}_h. 
\end{equation}
Let $\widehat{{{v}}} = (\widehat{v}_1,\ldots,\widehat{v}_{n_{_h}})^T$ and 
$\overline{{{v}}} = (\bar{v}_1,\ldots,\bar{v}_{n_{_H}})^T$. It is then easy to establish that the resulting system of equations from the aforementioned weak formulation (\ref{weakaug}) leads to the following augmented matrix systems~\cite{Griebel.M1994}:
\begin{Lemma}\Label{Augmented}
Let $\mathsf{I}_{\mathsf{P}}$ and $\mathsf{I}_{\mathsf{R}}$ be the prolongation and restriction given in (\ref{Prol}), respectively. Then the problem (\ref{main:deq}) can be written as the following matrix equation
\begin{equation}\Label{Aud:Mented}
\mathcal{A} \left( \begin{array}{c} \overline{{{v}}} \\ \widehat{{{v}}} \end{array} \right )= \widetilde{f} \qquad \text{i.e.} \;\;
\left ( \begin{array}{cc}
\mathsf{I}_{\mathsf{R}} \mathsf{A}_h \mathsf{I}_{\mathsf{P}}
 & \mathsf{I}_{\mathsf{R}} \mathsf{A}_h \\
\mathsf{A}_h \mathsf{I}_{\mathsf{P}}
 & \mathsf{A}_h \end{array} \right )  \left( \begin{array}{c} \overline{{v}} \\ \widehat{{{v}}} \end{array} \right ) =  \left( \begin{array}{c} \mathsf{I}_{\mathsf{R}} \, {\bf{f}}_h \\ {\bf{f}}_h \end{array} \right ).
\end{equation}
\end{Lemma}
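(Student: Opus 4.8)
The plan is to derive the matrix equation \eqref{Aud:Mented} directly from the redundant weak formulation \eqref{weakaug} by expanding the bilinear form on the augmented basis, and to justify the equivalence with \eqref{main:deq} via the observation that $\widetilde{V}_h$ and $V_h$ span the same space of functions.

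First I would record the crucial structural fact: since $k \geq 2$, every hat function $\psi_j$ already lies in $V_h$, so as a set of functions $\widetilde{V}_h = {\rm span}\{\psi_1,\ldots,\psi_{n_{_H}},\phi_1,\ldots,\phi_{n_{_h}}\} = V_h$. Consequently \eqref{weakaug} and \eqref{main:deq} are the same variational problem: both seek the unique $u_h \in V_h$ with $a(u_h,w_h) = \langle f, w_h\rangle$ for all $w_h \in V_h$. The only difference is that \eqref{weakaug} represents the trial function redundantly as $v_{_h} = \sum_i \bar v_i \psi_i + \sum_i \widehat v_i \phi_i$, so the coefficient pair $(\overline{v}, \widehat{v})$ is not unique even though the underlying function $v_h = u_h$ is. This already explains why the augmented operator $\mathcal{A}$ will be singular while the problem it encodes remains well posed at the level of functions.

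Next I would carry out the (routine but index-sensitive) assembly. Testing \eqref{weakaug} successively against the two families of basis functions $\{\psi_j\}$ and $\{\phi_i\}$ produces two blocks of equations; inserting $v_{_h} = \sum_k \bar v_k \psi_k + \sum_l \widehat v_l \phi_l$ and using bilinearity reduces every entry to the elementary form $a(\phi_\cdot,\phi_\cdot)$, i.e.\ to entries of $\mathsf{A}_h$, once each $\psi_j$ is expanded as $\psi_j = \sum_i c_j^i \phi_i$. With the column convention $\mathsf{I}_{\mathsf{P}} = ({\bf c}_1,\ldots,{\bf c}_{n_{_H}})$ and $\mathsf{I}_{\mathsf{R}} = \mathsf{I}_{\mathsf{P}}^T$, the four couplings identify as $a(\psi_k,\psi_j) = (\mathsf{I}_{\mathsf{R}}\mathsf{A}_h\mathsf{I}_{\mathsf{P}})_{jk}$, $a(\phi_l,\psi_j) = (\mathsf{I}_{\mathsf{R}}\mathsf{A}_h)_{jl}$, $a(\psi_k,\phi_i) = (\mathsf{A}_h\mathsf{I}_{\mathsf{P}})_{ik}$, and $a(\phi_l,\phi_i) = (\mathsf{A}_h)_{il}$, while the two right-hand sides give $\langle f,\psi_j\rangle = (\mathsf{I}_{\mathsf{R}}{\bf f}_h)_j$ and $\langle f,\phi_i\rangle = ({\bf f}_h)_i$. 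Collecting these entries into the trial unknowns $(\overline{v},\widehat{v})$ yields precisely the $2\times 2$ block system \eqref{Aud:Mented}.

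The only point that requires genuine care—and the main obstacle—is the bookkeeping of index conventions: the paper's choice $(\mathsf{A}_h)_{ij} = a(\phi_j,\phi_i)$ together with the column definition of $\mathsf{I}_{\mathsf{P}}$ must be matched so that the off-diagonal blocks come out as $\mathsf{I}_{\mathsf{R}}\mathsf{A}_h$ and $\mathsf{A}_h\mathsf{I}_{\mathsf{P}}$ rather than their transposes. A useful consistency check is that $(\mathsf{I}_{\mathsf{R}}\mathsf{A}_h)^T = \mathsf{A}_h \mathsf{I}_{\mathsf{P}}$ by symmetry of $\mathsf{A}_h$ and $\mathsf{I}_{\mathsf{R}} = \mathsf{I}_{\mathsf{P}}^T$, so the two off-diagonal blocks are mutual transposes and $\mathcal{A}$ is symmetric, as it must be since $a(\cdot,\cdot)$ is symmetric. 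I would close by re-reading the singularity of $\mathcal{A}$ through the first step: its nontrivial kernel corresponds exactly to the non-uniqueness of the redundant representation, while every solution $(\overline{v},\widehat{v})$ reconstructs the same function $v_h = u_h$ solving \eqref{main:deq}, which is the sense in which \eqref{main:deq} and \eqref{Aud:Mented} are equivalent.
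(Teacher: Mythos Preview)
Your proposal is correct and follows essentially the same approach as the paper: expand each $\psi_j$ in the $\{\phi_i\}$ basis via $\mathsf{I}_{\mathsf{P}}$, then identify the resulting block entries of $\mathcal{A}$ and the right-hand side with $\mathsf{I}_{\mathsf{R}}\mathsf{A}_h\mathsf{I}_{\mathsf{P}}$, $\mathsf{I}_{\mathsf{R}}\mathsf{A}_h$, $\mathsf{A}_h\mathsf{I}_{\mathsf{P}}$, $\mathsf{A}_h$, and $\mathsf{I}_{\mathsf{R}}{\bf f}_h$, ${\bf f}_h$. Your treatment is in fact more complete than the paper's terse version, since you also spell out the equivalence $\widetilde{V}_h = V_h$, verify the symmetry of $\mathcal{A}$, and interpret its kernel.
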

\begin{proof}
From the following relation that
\begin{equation}
\psi_i = \sum_{j=1}^{n_{_h}} c^j_i\phi_j,  \qquad i =1,\ldots,n_{_H},
\end{equation}
we can deduce that
\begin{equation}
a(\psi_i,\psi_j) = (\mathsf{I}_{\mathsf{R}} \mathsf{A}_h \mathsf{I}_{\mathsf{P}}
)_{ij}
\end{equation}
and
\begin{equation}\displaystyle
a(\psi_i,\phi_j) = a \Big ( \sum_{k=1}^{n_{_h}} c_i^k \phi_k, \, \phi_j \Big ) =
\sum_{k=1}^{n_{_h}} c_i^k a(\phi_k,\phi_j) = (\mathsf{I}_{\mathsf{R}} \mathsf{A}_h)_{ij}.
\end{equation}
We also note that 
$$
\langle f, \psi_i \rangle = \Big \langle f, \, \sum_{k=1}^{n_{_h}} c_j^k \phi_k \Big \rangle = \sum_{k=1}^{n_{_h}} c_j^k \langle f, \phi_k \rangle = \mathsf{I}_{\mathsf{R}} {\bf{f}}_h. 
$$
This completes the proof.
\end{proof}

There are a few interesting properties of the augmented matrix $\mathcal{A}$ given in (\ref{Aud:Mented}) which will be useful in later sections. First of all, the matrix $\mathcal{A}$ is singular with positive diagonal entries. Moreover, the range and the null space of $\mathcal{A}$ denoted by $\R(\mathcal{A}) =\K^\perp $ and $\K = \K(\mathcal{A})$, respectively, can be  characterized as follows:
\begin{eqnarray*}
\R(\mathcal{A}) = \left \{ \left ( \begin{array}{c} \mathsf{I}_{\mathsf{R}} {\bf{v}}_{_h} \\
{\bf{v}}_{_h}
\end{array} \right ) : v_{_h} \in V_h \right \} \,\, \mbox{ and } \,\,
\K(\mathcal{A})  = \left \{ \left ( \begin{array}{c} {\bf{c}}_{_H} \\ - \mathsf{I}_{\mathsf{P}}
 {\bf{c}}_{_H}
\end{array} \right ) : c_{_H} \in V_H \right \}.
\end{eqnarray*}

Let $\widetilde{{\bf{e}}}_k$ for $k=1,\ldots,n_{_H} + n_{_h}$ be the canonical basis for $\mathbb{R}^{n_{_H}+n_{_h}}$. We denote the solution space of the augmented matrix system \eqref{Aud:Mented} as 
\begin{equation}
\mathcal{V} := \left \{ \widetilde{v} = \left ( \begin{array}{c} \overline{{{v}}}  \\ \widehat{{{v}}} \end{array} \right ) \, : \,  v_{_h} = \sum_{i=1}^{n_{_H}} \bar{v}^i \psi_i + \sum_{i=1}^{n_{_h}} \hat{v}^i \phi_i, \quad \forall\, \,v_{_h} \in \widetilde{V}_h \right \}
\subseteq \mathbb{R}^{n_{_H}+n_{_h}}.
\end{equation}
It is worthy to note that Algorithm~\ref{gamg}\footnote{We can also analyze the  Algorithm~\ref{gamg} with presmoothing by modifying the space decomposition slightly to make it symmetric.} can analyzed in the framework of Successive Subspace Corrections (SSC)~\cite{Xu.J1992a} with the subspace decomposition
$$
\mathcal{V} = \mathcal{V}_0 + \mathcal{V}_1 + \cdots + \mathcal{V}_{n_{_h}}, 
$$ 
where $\mathcal{V}_0 = {\rm span}\,\{\widetilde{{\bf{e}}}_1,\ldots,\widetilde{{\bf{e}}}_{n_{_H}} \}$ and $\mathcal{V}_j = {\rm span} \{ \widetilde{{\bf{e}}}_{n_{_H}+j} \}$ for $j = 1, \ldots, n_{_h}$. In this setting, the SSC method can be written as follows: 
 
\begin{algorithm}[Successive Subspace Correction Method]\Label{algo}
Let $\widetilde{{u}}^0 \in \mathcal{V}$ be given.

\hskip 0.5cm $\mbox{{\bf{for}} } \ell = 1,2,\ldots $

\hskip 1.1cm $\widetilde{{u}}_0^{\ell-1} = \widetilde{{u}}^{\ell-1}$

\hskip 1.1cm $\mbox{{\bf{for }}} k=0,1,2,\ldots,n_{_h}$

\hskip 1.7cm $\mbox{Find } \widetilde{{w}}_{k} \in \mathcal{V}_{k}: 
\;\; \left ( \mathcal{A} \widetilde{{w}}_{k}, \widetilde{{v}}_{k} \right ) = \left ( \widetilde{{f}}, \widetilde{{v}}_{k} \right ) - \left ( \mathcal{A} \widetilde{{u}}^{\ell-1}_{k-1}, \widetilde{{v}}_{k} \right) \quad \forall\, \widetilde{{v}}_{k} \in \mathcal{V}_{k}$

\hskip 1.7cm $\widetilde{{u}}_{k}^{\ell-1} = \widetilde{{u}}_{k-1}^{\ell-1} + \widetilde{{w}}_{k} $

\hskip 1.1cm $\mbox{{\bf{endfor}}}$

\hskip 1.1cm $\widetilde{{u}}^\ell = \widetilde{{u}}_{n_{_h}}^{\ell-1}$

\hskip 0.5cm $\mbox{{\bf{endfor}}} $
\end{algorithm}

The error transfer operator of the above algorithm can be identified as\begin{equation}\Label{ErrorSSC}
\mathcal{E} = (\mathcal{I} - \mathcal{P}_{n_{_h}}) (\mathcal{I} - \mathcal{P}_{n_{_h}-1}) \cdots (\mathcal{I} - \mathcal{P}_{0}),
\end{equation}
where $\mathcal{I} : \mathcal{V} \mapsto \mathcal{V}$ is the identity matrix and
$\mathcal{P}_j : \mathcal{V} \mapsto \mathcal{V}_{j}$ for $j=0,\ldots,n_{_h}$ is the $\mathcal{A}$-projection onto the space $\mathcal{V}_{j}$. More precisely, $\mathcal{P}_0 : \mathcal{V} \mapsto \mathcal{V}_H$ can be defined as
\begin{equation*}
\mathcal{P}_0 \widetilde{v} = \left ( \begin{array}{c} 
\overline{v} + (\mathsf{I}_{\mathsf{R}} \mathsf{A}_h  \mathsf{I}_{\mathsf{P}})^{-1} \mathsf{I}_{\mathsf{R}} \mathsf{A}_h \widehat{v} 
\\ 0 \end{array} \right ) = \left ( \begin{array}{c} 
\overline{v} + \mathsf{P}_0 \widehat{v}  \\ 0 \end{array} \right )
\qquad \forall\, \widetilde{v} = 
\left(
\begin{array}{c}
\overline{v} \\
\widehat{v}
\end{array}
\right) \in \mathcal{V}, 
\end{equation*}
where $\mathsf{P}_0 = (\mathsf{I}_{\mathsf{R}} \mathsf{A}_h \mathsf{I}_{\mathsf{P}})^{-1} \mathsf{I}_{\mathsf{R}} \mathsf{A}_h$. For $j = 1, \ldots, n_{_h}$, we define the projections
\begin{equation*}
\mathcal{P}_j \widetilde{v} := \frac{(\A \widetilde{{\bf{e}}}_{n_{_H}+j}, \widetilde{v} )}{( \A \widetilde{{\bf{e}}}_{n_{_H}+ j}, \widetilde{{\bf{e}}}_{n_{_H}+ j})} \widetilde{{\bf{e}}}_{j} =  \frac{(\mathsf{A}_h {\bf{e}}_{j}, (\mathsf{I}_{\mathsf{P}} \overline{v} + \widehat{v}))}{(\mathsf{A}_h {\bf{e}}_{j}, {\bf{e}}_{j})} \widetilde{{\bf{e}}}_{n_{_H}+ j} = 
(\mathsf{P}_{j} (\mathsf{I}_{\mathsf{P}} \overline{v} + \widehat{v}), {\bf{e}}_{j})  \widetilde{{\bf{e}}}_{n_{_H}+j},  
\end{equation*} 
where ${\bf{e}}_j$ for $j=1,\ldots,n_{_h}$ is the canonical basis for $\mathbb{R}^{n_{_h}}$ and $\mathsf{P}_j$ is $\mathsf{A}_h$-projection onto the space ${\rm span}\{{\bf{e}}_j\}$ given by 
$$
\mathsf{P}_j\widehat{v} = \frac{(\mathsf{A}_h {\bf{e}}_j, \widehat{v})}{(\mathsf{A}_h {\bf{e}}_j, {\bf{e}}_j)} {\bf{e}}_j \qquad \forall\, \widehat{v} \in \mathbb{R}^{n_{_h}}. 
$$ 

\begin{remark}[Algorithm~\ref{gamg} and Block Gauss-Seidel Method]\label{rem:GS}\rm
We apply the space decomposition
\begin{equation}
\mathcal{V} = \mathcal{V}_{_H} + \mathcal{V}_{_h},
\end{equation}
where
\begin{equation*}
\mathcal{V}_{_H} = {\rm span}\,\{\widetilde{{\bf{e}}}_1,\ldots,\widetilde{{\bf{e}}}_{n_{_H}} \} \quad \mbox{ and } \quad 
\mathcal{V}_{_h} =  {\rm span}\,\{\widetilde{{\bf{e}}}_{n_{_H}+1},\ldots, \widetilde{{\bf{e}}}_{n_{_H} + n_{_h}} \}.
\end{equation*}
We decompose of the matrix $\mathsf{A}_h$ in the following form  
\begin{equation}
\mathsf{A}_h = \mathsf{D} - \mathsf{L} - \mathsf{L}^T,
\end{equation} 
where $\mathsf{D} = (a_{ii})_{i=1,\ldots,n_{_h}}$ is the diagonal part of $\mathsf{A}_h$ and $\mathsf{L} = (\ell_{ij})_{i,j=1,\ldots,n_{_h}}$ with $\ell_{ij} = 0$ for $i > j$ and $\ell_{ij} = -a(\phi_j,\phi_i)$ for $i < j$, i.e., the strictly lower triangular part of $\mathsf{A}_h$. Similarly we can decompose $\mathcal{A}$ as follows 
\begin{equation}
\mathcal{A} = \mathcal{D} - \mathcal{L} - \mathcal{L}^T, 
\end{equation}
where 
\begin{equation*}
\mathcal{D} :=  \left ( \begin{array}{cc}
\mathsf{I}_{\mathsf{R}} \mathsf{A}_h \mathsf{I}_{\mathsf{P}}
 & 0 \\
0 & \mathsf{D} \end{array} \right ) 
\quad \mbox{ and } \quad 
\mathcal{L} := \left ( \begin{array}{cc}
0 & 0 \\
- \mathsf{A}_h \mathsf{I}_{\mathsf{P}}
 & \mathsf{L} \end{array} \right ).
\end{equation*}
In fact, we can easily show that the two-level method Algorithm~\ref{gamg} is equivalent to the Gauss-Siedel method for the augmented system of equations~\eqref{Aud:Mented}:
\begin{equation}\label{bgamg}
\widetilde{{v}}^{\ell+1} = \widetilde{{v}}^\ell + (\mathcal{D} - \mathcal{L})^{-1} \, ( \, \widetilde{{f}} - \mathcal{A} \widetilde{{v}}^\ell \, ) \qquad \ell=0,1,\ldots 
\end{equation}
\end{remark}

%%%%%%%%%%%%%%%%%%%%%%%
\section{Convergence rate estimate for the two-level method} \label{sec:proof}
%%%%%%%%%%%%%%%%%%%%%%%

In this section, we establish a convergence rate estimate for Algorithm~\ref{gamg} using the formulation introduced in the previous section. We denote a semi inner product $(\cdot,\cdot)_\A : \mathcal{V} \times \mathcal{V} \mapsto \mathbb{R}$ and the induced semi-norm by $|\cdot|_\A = (\cdot,\cdot)_\A^{1/2}$. We now establish a convergence rate identity for the error transfer operator~\eqref{ErrorSSC}. Note that  relevant estimates have been reported in \cite{Lee.Y;Wu.J;Xu.J2002a,lee;wu;xu;zikatanov2008c}, but we provide a proof for completeness.

\begin{Theorem}[Convergence Rate Identity]\Label{ssc:cr}
The convergence rate for the iterative method (\ref{bgamg}) can be given by
\begin{equation}
|\mathcal{E}|_{\A}^2 = 1 - \frac{1}{K},
\end{equation} 
where  
\begin{eqnarray*}
K =1 + \sup_{\widetilde{u} \in \K^\perp} \inf_{\widetilde{c} \in \K} 
\frac{(\mathcal{S} (\widetilde{u}+\widetilde{c}), (\widetilde{u}+\widetilde{c}))}{(\widetilde{u},\widetilde{u})_\A}  = 
\sup_{\widetilde{u} \in \K^\perp}
\inf_{\widetilde{c} \in \K} \frac{\sum_{i=0}^{n_{_h}} 
\left | \mathcal{P}_i \left ( \sum_{j=i}^{n_{_h}} \widetilde{u}_j \right ) \right |_{{\A}}^2}{ (\widetilde{u},\widetilde{u})_\A },
\end{eqnarray*}
where $\mathcal{S} = {\mathcal{L}} {\mathcal{D}}^{-1} {\mathcal{L}}^T$ and $\widetilde{u}_j \in \mathcal{V}_j$ for $j=0,\ldots,n_{_h}$.
\end{Theorem}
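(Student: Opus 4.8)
The plan is to read Theorem \ref{ssc:cr} as the singular-matrix version of the Xu--Zikatanov identity for the multiplicative (Gauss--Seidel) iteration \eqref{bgamg}, and to prove it in three movements: a splitting identity that exhibits the rate as a Rayleigh quotient, the Xu--Zikatanov duality that yields the subspace-decomposition form of $K$, and a Pythagorean-plus-matrix computation that yields the $\mathcal{S}$-form. First I would invoke Remark \ref{rem:GS}, which identifies \eqref{bgamg} with the block sweep and gives $\mathcal{E}=\mathcal{I}-\mathcal{M}^{-1}\A=\mathcal{M}^{-1}\mathcal{L}^{T}$ with $\mathcal{M}:=\mathcal{D}-\mathcal{L}$, together with the structural relations $\mathcal{M}+\mathcal{M}^{T}-\A=\mathcal{D}$ and $\bar{\mathcal{M}}:=\mathcal{M}\mathcal{D}^{-1}\mathcal{M}^{T}=\A+\mathcal{S}$. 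Expanding $\A-\mathcal{E}^{T}\A\mathcal{E}$ and using $\A^{T}=\A$ gives the operator identity $\A-\mathcal{E}^{T}\A\mathcal{E}=\A\mathcal{M}^{-T}\mathcal{D}\mathcal{M}^{-1}\A=\A\bar{\mathcal{M}}^{-1}\A$, hence for every $\widetilde v$
\begin{equation*}
|\widetilde v|_{\A}^{2}-|\mathcal{E}\widetilde v|_{\A}^{2}=(\bar{\mathcal{M}}^{-1}\A\widetilde v,\ \A\widetilde v)\ge 0 .
\end{equation*}
Since one checks $\mathcal{E}\widetilde c=\widetilde c$ for $\widetilde c\in\K$ and both sides depend only on the $\K^{\perp}$-component of $\widetilde v$, the operator seminorm is attained on $\K^{\perp}$, where $|\cdot|_{\A}$ is a genuine norm; taking the supremum yields $|\mathcal{E}|_{\A}^{2}=1-1/K$. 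The same rate is obtained by telescoping: because each $\mathcal{P}_i$ is the $\A$-orthogonal projection onto $\mathcal{V}_i$, writing $\mathcal{E}_{i-1}=(\mathcal{I}-\mathcal{P}_{i-1})\cdots(\mathcal{I}-\mathcal{P}_{0})$, $\mathcal{E}_{-1}=\mathcal{I}$, turns \eqref{ErrorSSC} into $|\widetilde u|_{\A}^{2}-|\mathcal{E}\widetilde u|_{\A}^{2}=\sum_{i=0}^{n_{_h}}|\mathcal{P}_i\mathcal{E}_{i-1}\widetilde u|_{\A}^{2}=:S(\widetilde u)$, so $K=\sup_{\widetilde u\in\K^{\perp}}|\widetilde u|_{\A}^{2}/S(\widetilde u)$.

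Second, I would pass to the decomposition form. Because the splitting $\mathcal{V}=\mathcal{V}_0\oplus\cdots\oplus\mathcal{V}_{n_{_h}}$ is a direct sum, each vector $\widetilde u+\widetilde c$ has a unique decomposition $\widetilde u+\widetilde c=\sum_j\widetilde u_j$ with $\widetilde u_j\in\mathcal{V}_j$, so the only freedom in a stable decomposition of $\widetilde u$ (in the degenerate seminorm) is the kernel shift $\widetilde c\in\K$; this is why the infimum over $\K$ is the correct replacement for the ``infimum over decompositions'' of the classical identity. The substantive step is the Xu--Zikatanov duality, namely that for each $\widetilde u\in\K^{\perp}$
\begin{equation*}
\inf_{\widetilde c\in\K}\ \sum_{i=0}^{n_{_h}}\Big|\mathcal{P}_i\sum_{j=i}^{n_{_h}}\widetilde u_j\Big|_{\A}^{2}=\frac{|\widetilde u|_{\A}^{4}}{S(\widetilde u)} .
\end{equation*}
Dividing by $|\widetilde u|_{\A}^{2}$ and taking the supremum then converts $K=\sup_{\widetilde u}|\widetilde u|_{\A}^{2}/S(\widetilde u)$ into the second formula in the statement. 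I would prove this duality by the two matching inequalities of the Xu--Zikatanov argument: a refined Cauchy--Schwarz estimate, pairing the partial sums $\sum_{j\ge i}\widetilde u_j$ against the partial products $\mathcal{E}_{i-1}\widetilde u$, gives the lower bound for every competing decomposition, while the decomposition adapted to the forward sweep realizes the upper bound.

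Third, I would recover the $\mathcal{S}$-form from the second. Using $\A$-orthogonality and $\mathcal{P}_i\widetilde u_i=\widetilde u_i$, the Pythagorean expansion gives, for $\sum_j\widetilde u_j=\widetilde u+\widetilde c$ with $\widetilde c\in\K$ (so $|\widetilde u+\widetilde c|_{\A}=|\widetilde u|_{\A}$), the per-decomposition identity $\sum_i|\mathcal{P}_i\sum_{j\ge i}\widetilde u_j|_{\A}^{2}=|\widetilde u|_{\A}^{2}+\sum_i|\mathcal{P}_i\sum_{j>i}\widetilde u_j|_{\A}^{2}$, which produces the additive $1$. It then remains to identify the strictly-upper remainder with $(\mathcal{S}(\widetilde u+\widetilde c),\widetilde u+\widetilde c)$, $\mathcal{S}=\mathcal{L}\mathcal{D}^{-1}\mathcal{L}^{T}$, by inserting the explicit block forms of $\mathcal{P}_0,\ldots,\mathcal{P}_{n_{_h}}$ from \S\ref{sec:gamg}; this is the standard correspondence between the forward Gauss--Seidel sweep and the strictly-triangular part $\mathcal{S}$, here in block form with the coarse block $\mathsf{I}_{\mathsf{R}}\mathsf{A}_h\mathsf{I}_{\mathsf{P}}$ playing the role of $\mathcal{P}_0$ and consistent with $\bar{\mathcal{M}}=\A+\mathcal{S}$ from the first movement. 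Taking $\inf_{\widetilde c}\sup_{\widetilde u}$ delivers the first formula $K=1+\sup_{\widetilde u\in\K^{\perp}}\inf_{\widetilde c\in\K}(\mathcal{S}(\widetilde u+\widetilde c),\widetilde u+\widetilde c)/(\widetilde u,\widetilde u)_{\A}$.

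The main obstacle is the second movement: proving the Xu--Zikatanov duality, i.e.\ that the infimum over decompositions of $\sum_i|\mathcal{P}_i\sum_{j\ge i}\widetilde u_j|_{\A}^{2}$ is exactly $|\widetilde u|_{\A}^{4}/S(\widetilde u)$, which requires pinning down the sweep-adapted decomposition and showing it saturates the Cauchy--Schwarz bound. The difficulty specific to this paper, and the point the authors single out as not treated previously, is that $\A$ is only semidefinite: one must carry the entire argument on $\K^{\perp}$, verify that $\bar{\mathcal{M}}=\A+\mathcal{S}$ remains nonsingular even though $\A$ is singular, and confirm that the degeneracy is captured precisely by the infimum over $\widetilde c\in\K$. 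The first and third movements are then direct computations once the block structure of $\mathcal{D}$, $\mathcal{L}$, and the $\mathcal{P}_i$ is used.
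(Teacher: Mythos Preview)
Your proposal is correct, but the paper takes a shorter and more purely linear-algebraic route to the $\sup$--$\inf$ formula. Your first movement matches the paper exactly: both compute $(\mathcal{I}-\mathcal{B}\A)^{*}(\mathcal{I}-\mathcal{B}\A)=\mathcal{I}-(\A+\mathcal{S})^{-1}\A$ (equivalently $\A-\mathcal{E}^{T}\A\mathcal{E}=\A\bar{\mathcal{M}}^{-1}\A$) to get $|\mathcal{E}|_{\A}^{2}=1-1/K$ with $K^{-1}=\inf_{\widetilde u\in\K^{\perp}}((\A+\mathcal{S})^{-1}\A\widetilde u,\widetilde u)_{\A}/(\widetilde u,\widetilde u)_{\A}$. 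The divergence is in how the $\sup$--$\inf$ form is reached. The paper never introduces the telescoping sum $S(\widetilde u)$ or the Xu--Zikatanov duality with its Cauchy--Schwarz lower bound and sweep-adapted upper bound; instead it defines $\mathcal{M}=\A^{1/2}(\A+\mathcal{S})^{-1}\A\,\A^{1/2}$, which is SPD on $\K^{\perp}$, substitutes $\widetilde u\mapsto\mathcal{M}^{1/2}\widetilde u$ to flip the Rayleigh quotient, and then writes $(\A+\mathcal{S})^{-1}\A\widetilde u=\widetilde w+\widetilde c(\widetilde w)$ with $\widetilde w=Q(\A+\mathcal{S})^{-1}\A\widetilde u\in\K^{\perp}$, the map $Q$ being the $\ell^{2}$-projection onto $\K^{\perp}$. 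A one-line first-order optimality argument then shows that the minimizer of $((\A+\mathcal{S})(\widetilde w+\widetilde c),\widetilde w+\widetilde c)$ over $\widetilde c\in\K$ is exactly $\widetilde c(\widetilde w)$, yielding the $\sup$--$\inf$ form directly. Your route recovers more: the paper's proof actually stops at the $(\A+\mathcal{S})$-form and does not explicitly verify the second equality involving $\sum_{i}|\mathcal{P}_{i}\sum_{j\ge i}\widetilde u_{j}|_{\A}^{2}$, whereas your third movement (the Pythagorean split plus the block identification $\sum_{i}|\mathcal{P}_{i}\sum_{j>i}\widetilde u_{j}|_{\A}^{2}=(\mathcal{S}(\widetilde u+\widetilde c),\widetilde u+\widetilde c)$) supplies that link. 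What the paper's approach buys is brevity and the avoidance of the saturating-decomposition argument; what yours buys is a transparent connection between the matrix form $\mathcal{S}$ and the subspace-correction form via the projections $\mathcal{P}_{i}$.
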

\begin{proof}
Let $\mathcal{B} := (\mathcal{D} - \mathcal{L})^{-1}$. From Remark~\ref{rem:GS}, we have $\mathcal{E}=\mathcal{I}-\mathcal{B}\mathcal{A}$. By the definition of the semi-norm $|\mathcal{E}|_{\A}$, we have the following identity
\begin{eqnarray*}
|\mathcal{E}|_{\A}^2 &=& 
\sup_{\widetilde{u} \in \K^\perp } \frac{(\mathcal{E} \widetilde{u}, \mathcal{E} \widetilde{u})_{\A}}{(\widetilde{u},\widetilde{u})_{\A}} 
= \sup_{\widetilde{u} \in \K^\perp} \frac{\left ( ({\I} - {\B}{\A}) \widetilde{u},
({\I} - {\B} {\A}) \widetilde{u} \right )_{\A}}{(\widetilde{u},\widetilde{u})_{\A}} \\
&=& \sup_{\widetilde{u} \in \K^\perp} \frac{ \left( ( {\I} - {\B} {\A} )^*( {\I} - {\B} {\A} ) \widetilde{u}, \widetilde{u} \right )_{\A} }{(\widetilde{u},\widetilde{u})_{\A}},
\end{eqnarray*}
where $( {\I}  - {\B} {\A})^*$ is the adjoint operator of ${\I} - {\B} {\A}$ with respect to the semi inner product $(\cdot,\cdot)_{{\A}}$. 

We notice that $( {\I}  - {\B} {\A})^* = {\I} - {\B}^T {\A}$, where $\B^T$ is the adjoint of $\B$ with respect to the usual $\ell^2$ inner product. Furthermore, since
\begin{eqnarray*}
({\I} - {\B} {\A})^*({\I} - {\B} {\A}) &=& \I - \B^T (\B^{-T} + \B^{-1} - \A) \B \A  \\ 
&=&  \I - (\B^{-1} \mathcal{D}^{-1} \B^{-T})^{-1} \A  = \I - (\A + \mathcal{S})^{-1} \A,
\end{eqnarray*}
we then have that
\begin{eqnarray*}
|\E|_{\A}^2 = 1 - \inf_{\widetilde{u} \in \K^\perp} \frac{((\A + {\mathcal{S}})^{-1} {\A}
\widetilde{u},\widetilde{u})_{\A}}{(\widetilde{u},\widetilde{u})_{\A}}.
\end{eqnarray*}
We shall now define ${\mathcal{M}} = {\A}^{1/2}( {\A} + {\mathcal{S}} )^{-1} {\A} {\A}^{1/2}$ and obtain the following identity : 
\begin{eqnarray*}
K &=& \sup_{\widetilde{u} \in \K^\perp}
\frac{(\widetilde{u}, \widetilde{u})_{\A}}{((\A+ \mathcal{S})^{-1} \A \widetilde{u}, \widetilde{u})_{\A}} = \sup_{\widetilde{u} \in \K^\perp} \frac{(\widetilde{u}, \widetilde{u})_{\A}}{(\A^{-1/2} {\mathcal{M}} \A^{-1/2}\widetilde{u}, \widetilde{u})_{\A}} 
\\
&=& \sup_{\widetilde{u} \in \K^\perp} \frac{(\A^{-1/2} {\mathcal{M}} \A^{-1/2} \widetilde{u}, \widetilde{u})_{\A}}{( \A^{-1/2} {\mathcal{M}} {\A}^{-1/2} \widetilde{u}, {\A}^{-1/2} {\mathcal{M}} {\A}^{-1/2} \widetilde{u})_{{\A}}}.
\end{eqnarray*}
The last equality is from the fact that $\mathcal{M} : \K^\perp \mapsto \K^\perp$ is symmetric and positive definite matrix and by replacing $\widetilde{u}$ by $\mathcal{M}^{1/2}\widetilde{u}$. Let $Q : \mathcal{V} \mapsto \K^\perp$ be the $\ell^2$-orthogonal projection, for which
\begin{equation}
{\A} Q \widetilde{v} = {\A} \widetilde{v} \qquad \forall\, \widetilde{v} \in \mathcal{V}. 
\end{equation}
We now write $(\A + {\mathcal{S}})^{-1} {\A}\widetilde{u} = \widetilde{w} + \widetilde{c}(\widetilde{w})$, where $\widetilde{w} := Q(\A + {\mathcal{S}})^{-1} \A \widetilde{u} \in \K^\perp$ and $\widetilde{c}(\widetilde{w}) \in \K$. Note that $\widetilde{c}(\widetilde{w})$ is
uniquely determined by $\widetilde{w}$. Therefore, due to the fact that 
$$
\A^{-1/2} {\mathcal{M}} {\A}^{-1/2} \widetilde{u} = (\A + \mathcal{S})^{-1}\A \widetilde{u} = \widetilde{w} + \widetilde{c}(\widetilde{w}) \quad \mbox{ and } \quad (\A + \mathcal{S})(\widetilde{w} + \widetilde{c}(\widetilde{w})) = \A \widetilde{u},$$  
we immediately obtain that
\[
K = \sup_{\widetilde{w} \in \K^\perp } \frac{ \left ( (\widetilde{w} + \widetilde{c}(\widetilde{w})), \, ( \A + {\mathcal{S}})(\widetilde{w} + \widetilde{c}(\widetilde{w}) \right )}{(\widetilde{w}, \widetilde{w})_{{\A}}} = \sup_{\widetilde{w} \in \K^\perp } \inf_{\widetilde{c} \in \K} \frac{(({\A} + {\mathcal{S}})(\widetilde{w} + \widetilde{c}),(\widetilde{w} + \widetilde{c}))}{(\widetilde{w}, \widetilde{w})_{\A}}. 
\]

The last equality is obtained by the following reasoning: For a fixed $\widetilde{w} \in \K^\perp$, we assume that 
\begin{equation}
\widetilde{\xi}  = {\rm arg} \inf_{\widetilde{c} \in \K } \frac{\big( (\A + {\mathcal{S}})(\widetilde{w} + \widetilde{c}),(\widetilde{w} + \widetilde{c}) \big)}{(\widetilde{w} ,\widetilde{w})_{\A}}.
\end{equation}
Then the first order optimality condition implies that $\xi$ satisfies
\begin{equation}
\left( ({\A} + {\mathcal{S}})(\widetilde{w} + \widetilde{\xi}),\, \widetilde{c} \right) = 0 \qquad \forall\,  \widetilde{c} \in \K.
\end{equation}
This in turn implies that $({\A}+ {\mathcal{S}})(\widetilde{w} + \widetilde{\xi}) \in \K^\perp$ and $\widetilde{\xi} = \widetilde{c}(\widetilde{w})$. This completes the proof.
\end{proof}

%%%%%%%%%%%%%%%%%%%%%%

To establish a uniform convergence rate of the proposed two-level method (Algorithm~\ref{gamg}), we show that $K$ in Theorem \ref{ssc:cr} can be bounded by a generic constant independent of mesh size. The standard prolongation $I_h^H : V_H \mapsto V_h$ (the inclusion operator) and the restriction operator $I_H^h : V_h \mapsto V_H$ can be written as 
\begin{equation}
I_h^H w_{_H} = (\phi_1,\ldots,\phi_{n_{_h}}) \mathsf{I}_\mathsf{P} {\bf{w}}_H \quad \mbox{ and } \quad I_H^h v_{_h} = (\psi_1,\ldots,\psi_{n_{_H}}) \mathsf{I}_{\mathsf{R}} {\bf{v}}_{_h}.
\end{equation}
We introduce two additional restriction operators: the usual $L^2$ projection $Q_H : V_h \mapsto V_H$ and the elliptic projection $P_H : V_h \mapsto V_H$ defined by 
\begin{equation*}
(Q_Hv,w_{_H}) = (v,w_{_H}) \; \mbox{ and } \; a(P_H v, w_{_H}) = a(v, w_{_H}) \quad \forall v \in V_h, \; w_{_H} \in V_H,
\end{equation*}
respectively. It is clear that 
$
(\textsf{A}_h \textsf{I}_{\textsf{P}}{\bf{v}}_{_H}, \, \textsf{I}_{\textsf{P}}{\bf{v}}_{_H}) = \|v_{_H}\|^2_1
$
and the vector representation of $P_H v$ is simply $\textsf{A}_H^{-1} \textsf{I}_\textsf{R} \textsf{A}_h {\bf{v}}$.
Therefore, we have that, for any $v \in V_h$, the following inequality holds:
\begin{equation}\label{trivin}
(\textsf{A}_h \textsf{I}_{\textsf{P}} \textsf{A}_H^{-1} \textsf{I}_\textsf{R} \textsf{A}_h {\bf{v}}, \, \textsf{I}_{\textsf{P}}\textsf{A}_H^{-1} \textsf{I}_\textsf{R} \textsf{A}_h {\bf{v}}) = \|P_Hv\|_1^2 \leq \|v\|^2_1.  
\end{equation}

We are now ready to prove the main theoretical result in this paper. Note that there are abundant literatures regarding the uniform convergence of two grid methods. Unlike those reported in literatures such as~\cite{Shu.S;Sun.D;Xu.J2006}, our convergence estimate uses a different and novel technique, which is based on a convergence rate estimate for a singular system from the multilevel and redundant decomposition of the solution space. 
\begin{Theorem}[Uniform Convergence]
The following estimate holds true
\begin{eqnarray*}
K &=& 1 + \sup_{\widetilde{u} \in \K^\perp} \inf_{\widetilde{c} \in \K} 
\frac{(\mathcal{S} (\widetilde{u}+\widetilde{c}), (\widetilde{u}+\widetilde{c}))}{(\widetilde{u},\widetilde{u})_\A}  \\
&=& \sup_{\widetilde{u} \in \K^\perp} \inf_{\widetilde{c} \in \K}
\frac{\sum_{i=0}^{n_{_h}}  
\left | \mathcal{P}_i \left ( \sum_{j=i}^{n_{_h}} \widetilde{u}_j \right ) \right |_{{\A}}^2}{(\widetilde{u},\widetilde{u})_{\A}} \lesssim \sup_{v_{_h} \in V_h} \frac{\|v_{_h} - Q_H v_{_h}\|_1}{\|v_{_h}\|_1} + 1 \lesssim 1,  
\end{eqnarray*}
where $\widetilde{u}_j \in \mathcal{V}_j$ for $j=0,1,\ldots,n_{_h}$ such that $\sum_{j=0}^{n_{_h}} \widetilde{u}_j = \widetilde{u} + \widetilde{c}$. 
\end{Theorem}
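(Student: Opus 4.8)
The two displayed equalities are precisely the content of Theorem~\ref{ssc:cr}, so the only new work is the chain of inequalities, and by the $\inf$ over $\widetilde{c}\in\K$ it suffices to exhibit one good decomposition. The plan is to work with the function represented by an augmented vector: recall that $\A$ is the Gram matrix of $a(\cdot,\cdot)$ in the redundant basis $\{\psi_i\}\cup\{\phi_j\}$, so for any $\widetilde{x}$ we have $(\A\widetilde{x},\widetilde{x})=\|x\|_1^2$, where $x\in V_h$ is the function $\widetilde{x}$ represents, and adding an element of $\K$ leaves that function unchanged. Fix $\widetilde{u}\in\K^\perp$ and let $v_h\in V_h$ be its represented function, so $|\widetilde{u}|_\A^2=\|v_h\|_1^2$. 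I would then choose the decomposition $\widetilde{u}+\widetilde{c}=\sum_{j=0}^{n_h}\widetilde{u}_j$ in which the coarse block $\widetilde{u}_0\in\mathcal{V}_0$ represents the $L^2$-projection $Q_Hv_h\in V_H$, while for $j\ge 1$ the one-dimensional block $\widetilde{u}_j\in\mathcal{V}_j$ carries the $j$-th nodal coefficient of the remainder $r_h:=v_h-Q_Hv_h$ (this forces the kernel correction $\widetilde{c}$ and is consistent because the total represented function is $Q_Hv_h+r_h=v_h$). It then remains to bound the numerator $\sum_{i=0}^{n_h}\bigl|\mathcal{P}_i\bigl(\sum_{j\ge i}\widetilde{u}_j\bigr)\bigr|_\A^2$, which I split into the $i=0$ term and the $i\ge1$ terms.

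For the coarse term I would use that $\mathcal{P}_0$ annihilates $\K$: a direct check with the formula for $\mathcal{P}_0$ and $\mathsf{P}_0=\mathsf{A}_H^{-1}\mathsf{I}_{\mathsf{R}}\mathsf{A}_h$ gives $\mathcal{P}_0\widetilde{c}=0$ for $\widetilde{c}\in\K$, and more generally $\mathcal{P}_0$ sends the representation of a function $w$ to the representation of its elliptic projection $P_Hw$. Hence $\mathcal{P}_0(\widetilde{u}+\widetilde{c})=\mathcal{P}_0\widetilde{u}$ represents $P_Hv_h$, independently of $\widetilde{c}$, and $|\mathcal{P}_0(\widetilde{u}+\widetilde{c})|_\A^2=\|P_Hv_h\|_1^2\le\|v_h\|_1^2$ by \eqref{trivin}. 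This accounts for the additive $1$.

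For the fine terms I would exploit the explicit formula for $\mathcal{P}_i$, $i\ge1$, together with $|\widetilde{\mathbf{e}}_{n_H+i}|_\A^2=(\mathsf{A}_h)_{ii}$, to reduce $\bigl|\mathcal{P}_i\bigl(\sum_{j\ge i}\widetilde{u}_j\bigr)\bigr|_\A^2$ to the single-level Gauss--Seidel quantity $|\mathsf{P}_i\mathbf{z}_i|_{\mathsf{A}_h}^2$, where $\mathbf{z}_i$ is the truncation of the fine coefficient vector $\mathbf{r}_h$ of $r_h$ keeping only the entries with index $\ge i$ (the coarse block drops out since $i\ge1$). A standard smoother estimate---Cauchy--Schwarz over the $O(1)$ nonzeros per row furnished by shape regularity, followed by $\sum_j|(\mathsf{A}_h)_{ij}|\lesssim(\mathsf{A}_h)_{ii}$ and an interchange of summation---then yields $\sum_{i=1}^{n_h}|\mathsf{P}_i\mathbf{z}_i|_{\mathsf{A}_h}^2\lesssim(\mathsf{D}\mathbf{r}_h,\mathbf{r}_h)$, so the whole fine sweep costs only the diagonally weighted norm of the remainder.

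The last and hardest step is to turn $(\mathsf{D}\mathbf{r}_h,\mathbf{r}_h)$ into $\|r_h\|_1^2$. Since $\mathsf{D}\sim h^{d-2}\,\mathrm{Id}$ while the mass matrix scales like $h^d$, the inverse/scaling estimate gives $(\mathsf{D}\mathbf{r}_h,\mathbf{r}_h)\lesssim h^{-2}\|r_h\|_0^2$; note that the usual spectral inequality runs the wrong way ($\mathsf{A}_h\lesssim\mathsf{D}$), so this cannot be bounded by $\|r_h\|_1^2$ for a generic finite element function. The crux is that $r_h$ is the $L^2$-orthogonal remainder: because $Q_H$ is a projection, $Q_Hr_h=0$, and the $L^2$ error estimate for $Q_H$ then gives $\|r_h\|_0=\|(\mathrm{Id}-Q_H)r_h\|_0\lesssim h\,\|r_h\|_1$, which precisely reverses the inverse estimate. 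Combining, $(\mathsf{D}\mathbf{r}_h,\mathbf{r}_h)\lesssim\|r_h\|_1^2=\|v_h-Q_Hv_h\|_1^2\lesssim\|v_h-Q_Hv_h\|_1\,\|v_h\|_1$; dividing the full numerator by $|\widetilde{u}|_\A^2=\|v_h\|_1^2$ produces the middle bound $1+\sup_{v_h}\|v_h-Q_Hv_h\|_1/\|v_h\|_1$. The final $\lesssim1$ is then the $H^1$-stability of the $L^2$-projection, $\|v_h-Q_Hv_h\|_1\le\|v_h\|_1+\|Q_Hv_h\|_1\lesssim\|v_h\|_1$, valid on shape-regular meshes. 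I expect this reversal---controlling $(\mathsf{D}\mathbf{r}_h,\mathbf{r}_h)$ by $\|r_h\|_1^2$ through the orthogonality $Q_Hr_h=0$---to be the main obstacle; everything else is bookkeeping in the augmented coordinates and the routine Gauss--Seidel bound.
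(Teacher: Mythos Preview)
Your proposal is correct and follows essentially the same route as the paper: choose the kernel correction so that the coarse block carries $Q_Hv_h$ and the fine blocks carry the nodal coefficients of $r_h=v_h-Q_Hv_h$, bound the fine sweep by $h^{-2}\|r_h\|_0^2$ via the local Cauchy--Schwarz/diagonal scaling argument, and close with the $L^2$-approximation property $\|(I-Q_H)r_h\|_0\lesssim h\|r_h\|_1$ together with $H^1$-stability of $Q_H$. Your handling of the coarse term is in fact a little cleaner than the paper's: you observe directly that $\mathcal{P}_0$ annihilates $\K$ and that $\mathcal{P}_0\widetilde{u}$ represents $P_Hv_h$, giving $|\mathcal{P}_0(\widetilde{u}+\widetilde{c})|_\A^2=\|P_Hv_h\|_1^2\le\|v_h\|_1^2$ in one stroke, whereas the paper works with the explicit $\K^\perp$-parametrization $(\mathsf{I}_{\mathsf{R}}\mathbf{v}_h,\mathbf{v}_h)^T$, splits via the triangle inequality, and then uses the identity $v_h-Q_Hv_h=(v_h+I_H^hv_h)-Q_H(v_h+I_H^hv_h)$ to match the denominator $\|v_h+I_H^hv_h\|_1^2$; working with the represented function as you do makes that bookkeeping unnecessary.
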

\begin{proof}
Let $\widetilde{u} \in \K^\perp$ and $\widetilde{c} \in \K$ be given by 
\begin{equation}
\widetilde{u}  = \left ( \begin{array}{c} \mathsf{I}_{\mathsf{R}} {\bf{v}}_{_h} \\ {\bf{v}}_{_h} \end{array} \right ) \, \mbox{ and } \, \widetilde{c} = \left ( \begin{array}{c}
{\bf{v}}_{_H} \\ - \mathsf{I}_{\mathsf{P}} {\bf{v}}_{_H}
\end{array} \right ), \, \mbox{ where } {\bf{v}}_{_h} \in \mathbb{R}^{n_{_h}}, \,\, {\bf{v}}_{_H} \in
\mathbb{R}^{n_{_H}}.
\end{equation}
From Theorem~\ref{ssc:cr}, we have the identity 
\begin{eqnarray*}
K =\sup_{\widetilde{u} \in \K^\perp}
\inf_{\widetilde{c} \in \K} \frac{\sum_{i=0}^{n_{_h}} 
\left | \mathcal{P}_i \left ( \sum_{j=i}^{n_{_h}} \widetilde{u}_j \right ) \right |_{{\A}}^2}{(\widetilde{u},\widetilde{u})_{\mathcal{A}}}. 
\end{eqnarray*}
We note that the following identity holds true 
\begin{equation}
\left (\widetilde{u}, \widetilde{u} \right )_{\A} = \left \| \left(\mathsf{I} + \mathsf{I}_{\mathsf{P}}
\mathsf{I}_{\mathsf{R}} \right ) {\bf{v}}_{_h} \right \|_{\mathsf{A}_{_h}}^2 = \|v_{_h} + I_H^h v_{_h} \|_1^2, 
\end{equation}
where  ${\bf{v}}_{_h}$ is the vector representation of $v_{_h} \in V_h$ in terms of the basis functions $\{\phi_i\}_{i=1}^{n_{_h}}$. Therefore, by choosing ${\bf{v}}_{_H}$ as the vector representation of $v_{_H} = Q_H v_{_h}$, we obtain the following relation 
$$
\sum_{j=0}^{n_{_h}} \widetilde{u}_j =  \left ( \begin{array}{c} \mathsf{I}_{\mathsf{R}} {\bf{v}}_{_h} + {\bf{v}}_{_H} 
\\ {\bf{v}}_{_h} -  \mathsf{I}_{\mathsf{P}} {\bf{v}}_{_H} \end{array} \right ) = \left ( \begin{array}{c}  \mathsf{I}_{\mathsf{R}} {\bf{v}}_{_h} + {\bf{v}}_{_H}  
\\ {\bf{v}}_{_h} - \mathsf{I}_{\mathsf{P}} {\bf{v}}_{_H} \end{array} \right )
$$ 
and, in turn, 
\begin{eqnarray*}
\sum_{i=0}^{n_{_h}} \left | \mathcal{P}_i
\left ( \sum_{j=i}^{n_{_h}} \widetilde{u}_j \right ) \right |_{\A}^2 &=& \sum_{i=1}^{n_{_h}} \left | \mathcal{P}_i
\left ( \sum_{j=i}^{n_{_h}} \widetilde{u}_j \right ) \right |_{\A}^2 + \left |\mathcal{P}_0 \left ( \begin{array}{c}  \mathsf{I}_\mathsf{R} {\bf{v}}_{_h} + {\bf{v}}_{_H} \\ {\bf{v}}_{_h} - \mathsf{I}_{\mathsf{P}} {\bf{v}}_{_H} \end{array} \right )\right |_{\A}^2 \\
&=& \sum_{i=1}^{n_{_h}} \left \| \mathsf{P}_i \left ( \sum_{j = i}^{n_{_h}} \widehat{u}_j {\bf{e}}_j \right ) \right \|_{\mathsf{A}_{_h}}^2 + \left \| \mathsf{I}_\mathsf{R} {\bf{v}}_{_h} + {\bf{v}}_{_H} +  \mathsf{P}_{0} \left ( {\bf{v}}_{_h} - \mathsf{I}_{\mathsf{P}} {\bf{v}}_{_H} \right) \right \|_{\mathsf{A}_{_H}}^2,
\end{eqnarray*}
where $(\widehat{u}_1,\ldots,\widehat{u}_{n_{_h}})^T = {\bf{v}}_{_h} - \mathsf{I}_{\mathsf{P}} {\bf{v}}_{_H}$. Next, we estimate the two parts in the above identity separately. 

It is straight-forward to  observe that
\begin{eqnarray*}
\|\mathsf{P}_0 ({\bf{v}}_{_h} - \mathsf{I}_{\mathsf{P}} {\bf{v}}_{_H}) \|_{\mathsf{A}_{_H}}^2 &=& (\mathsf{P}_0 ({\bf{v}}_{_h} - \mathsf{I}_{\mathsf{P}} {\bf{v}}_{_H}),\mathsf{P}_0 ({\bf{v}}_{_h} - \mathsf{I}_{\mathsf{P}} {\bf{v}}_{_H}))_{\mathsf{A}_{_H}} \\ 
&=& (\mathsf{I}_{\mathsf{R}} \mathsf{A}_{_h} ({\bf{v}}_{_h} - \mathsf{I}_{\mathsf{P}} {\bf{v}}_{_H}), \mathsf{A}_{_H}^{-1} \mathsf{I}_{\mathsf{R}} \mathsf{A}_{_h} ({\bf{v}}_{_h} - \mathsf{I}_{\mathsf{P}} {\bf{v}}_{_H})) \\
&=& (\mathsf{A}_{_h} ({\bf{v}}_{_h} - \mathsf{I}_{\mathsf{P}} {\bf{v}}_{_H}), \mathsf{I}_{\mathsf{P}} \mathsf{A}_{_H}^{-1} \mathsf{I}_{\mathsf{R}} \mathsf{A}_{_h} ({\bf{v}}_{_h} - \mathsf{I}_{\mathsf{P}} {\bf{v}}_{_H})) \\
&\leq& \|{\bf{v}}_{_h} - \mathsf{I}_{\mathsf{P}} {\bf{v}}_{_H} \|_{\mathsf{A}_{_h}}^2, \qquad \mbox{ due to the inequality } (\ref{trivin}). 
\end{eqnarray*}
Therefore, we obtain that   
\begin{eqnarray}
\left \| \mathsf{I}_\mathsf{R} {\bf{v}}_{_h} + {\bf{v}}_{_H} + \mathsf{P}_{0} \left ( {\bf{v}}_{_h} - \mathsf{I}_{\mathsf{P}}{\bf{v}}_{_H}\right )  \right \|_{\mathsf{A}_{_H}}^2 &\lesssim& 
\| \mathsf{I}_{\mathsf{P}} \mathsf{I}_\mathsf{R} {\bf{v}}_{_h} + \mathsf{I}_{\mathsf{P}} {\bf{v}}_{_H} \|_{\mathsf{A}_{_h}}^2 + \|\mathsf{P}_{0} \left ( {\bf{v}}_{_h} - \mathsf{I}_{\mathsf{P}}{\bf{v}}_{_H}\right ) \|_{\mathsf{A}_{_H}}^2 
\nonumber \\ 
&\lesssim& \left \| \mathsf{I}_{\mathsf{P}} \mathsf{I}_\mathsf{R} {\bf{v}}_{_h} + {\bf{v}}_{_h} \right \|_{\mathsf{A}_{_h}}^2 + \|  {\bf{v}}_{_h}  - \mathsf{I}_{\mathsf{P}} {\bf{v}}_{_H} \|_{\mathsf{A}_{_h}}^2
\nonumber \\ 
&=& \|I_H^h v_{_h}+ v_{_h}\|_1^2 + \|v_{_h} - Q_H v_{_h}\|_1^2. \label{eqn:part1}
\end{eqnarray}

Let $\Omega_i = {\rm supp}\, \phi_i$ and $\mathsf{A}_{_h} = (a_{ij})$ with $a_{ij} = a(\phi_j,\phi_i)$. Using the Cauchy-Schwarz inequality, we have the following estimate
\begin{eqnarray*}
\sum_{i=1}^{n_{_h}} \left \| \mathsf{P}_i \left ( \sum_{j=i}^{n_{_h}} \widehat{u}_j \right ) \right \|_{\mathsf{A}_{_h}}^2 &=&
\sum_{i=1}^{n_{_h}}\left ( \left ({\bf{e}}_i^T \mathsf{A}_{_h}
{\bf{e}}_i \right )^{-1} {\bf{e}}_i^T \mathsf{A}_{_h} \Big (\sum_{j=i}^{n_{_h}}\widehat{u}_j \Big ) {\bf{e}}_i^T
\mathsf{A}_{_h} \Big ( \sum_{j=i}^{n_{_h}} \widehat{u}_j \Big ) \right )
\\
&=& \sum_{i=1}^{n_{_h}} a(\phi_i,\phi_i)^{-1} a\left (\phi_i,\sum_{j=i}^{n_{_h}} \widehat{u}_j\phi_j \right )^2 \\
&\leq&
\sum_{i=1}^{n_{_h}} \int_{\Omega_i} \left | \nabla 
\sum_{j=i}^{n_{_h}} \widehat{u}_j \phi_j \right |^2 dx 
\lesssim \sum_{i=1}^{n_{_h}} \sum_{j \in N_k(i)} h^{d-2} \widehat{u}_j^2, 
\end{eqnarray*}
where $d$ is the dimension and $N_k(i) = \{j \in \{1,\ldots,n_{_h}\} : \Omega_j \cap \Omega_i \neq \emptyset \}$. Norm equivalence leads to the following estimate that 
\begin{equation*}
\sum_{j \in N_k(i)} h^{d} \widehat{u}_j^2 \lesssim \| v_{_h} - Q_H v_{_h} \|_{0,\Omega_i}^2. 
\end{equation*}
Therefore, we arrive at the conclusion that 
\begin{eqnarray}
\sum_{i=1}^{n_{_h}} \left \| \mathsf{P}_i \left ( \sum_{j=i}^{n_{_h}} \widehat{u}_j \right ) \right \|_{\mathsf{A}_{_h}}^2 
&\lesssim& h^{-2} \sum_{i=1}^{n_{_h}} \|v_{_h} - Q_H v_{_h}\|^2_{0,\Omega_i}\lesssim h^{-2} \|v_{_h} - Q_Hv_{_h}\|_0^2
\nonumber \\ 
& = & h^{-2} \|(v_{_h} + I_H^h v_{_h}) - Q_H ( v_{_h} + I_H^h v_{_h}) \|_0^2
\nonumber \\ 
&\lesssim& \|(I - Q_H)(v_{_h} + I_H^h v_{_h})\|_1^2. \label{eqn:part2}
\end{eqnarray}
By combining \eqref{eqn:part1} and \eqref{eqn:part2}, we obtain the desired estimate for $K$, which completes the proof.
\end{proof}

%!TEX root = main.tex

%%%%%%%%%%%%%%%%%%%%%%%%%%%%%%%%%%%%%%
\section{Block preconditioners for the Stokes equation}\label{sec:mfem}
%%%%%%%%%%%%%%%%%%%%%%%%%%%%%%%%%%%%%%

In this section, we consider efficient Poisson-based preconditioning techniques for the Stokes system with no-slip boundary condition: Find velocity ${\bf u}$ and pressure $p$, such that
\begin{equation}\label{eqn:stokes}
\left\{
\begin{array}{rcl}
- \Delta {\bf u} - \nabla p&=&{\bf f} \qquad \mbox{in }\Omega 
\\
\nabla\cdot {\bf u}&=& 0 \qquad \mbox{in }\Omega 
\\
{\bf u}&=&{\bf 0} \qquad \mbox{on }\partial\Omega,
\end{array}
\right.
\end{equation}
where $\Omega \subset \mathbb{R}^d \; (d=2,3)$  is a bounded polygonal domain and ${\bf f}$ is a given function. 

There have been extensive discussions on discretizations of the Stokes equation. In this paper, we shall focus on the mixed finite element discretizations; see, for example, \cite{Temam.R1977,Girault.V;Raviart.P1986,Brezzi.F;Fortin.M1991}. When solving this problem with mixed finite element methods (MFEM), a pair of discrete function spaces for velocity and pressure must be chosen carefully so that it is stable, i.e., satisfying so-called the inf-sup condition. There are a number of important classes of stable pairs. In particular, a family of Hood--Taylor finite elements~\cite{Taylor1973} that approximates the velocity by continuous piecewise $k$-th order polynomials ($P^{k,0}$) and the pressure by continuous piecewise $(k-1)$-th order polynomials ($P^{k-1,0}$) with $k \geq 2.$ is known to be stable for full three dimensional Stokes equation \cite{Boffi1997}. Another important stable elements shown only in 2D, is the Scott--Vogelius elements~\cite{Scott.L;Vogelius.M1985} with $k \geq 4$, which approximates the velocity by continuous piecewise $k$-th order polynomials ($P^{k,0}$) and the pressure by discontinuous piecewise $(k-1)$-th order polynomials ($P^{k-1,-1}$). These pairs of mixed finite elements are very promising because they preserve the incompressibility condition, namely, discrete divergence free condition in the strong sense. 

Assume that the coefficient matrix arising in mixed finite element discretizations of \eqref{eqn:stokes} can be written as
\[
F :=
 \left ( \begin{array}{cc} A & B^T \\ B & 0 \end{array}
 \right)  
\]
Here, $B$ is the discrete divergence operator, i.e., $-\nabla_h\cdot$; and, $A$ is the block diagonal matrix with the discrete Laplace matrix $-\Delta_h$ on its diagonal. Let $x_\vu$ and $x_p$ be the unknown vectors of the velocity field and pressure, respectively. Then we need to solve the system of linear equations
\begin{equation}\label{saddlepoint}
F x = b \qquad \text{or} \qquad F \, \Big[ \begin{array}{c} x_\vu \\ x_p \end{array} \Big] =  \Big[\, b \,\Big].
\end{equation}

$F$ is symmetric and positive semidefinite and we can apply Krylov subspace methods for indefinite problems such as the minimal residual (MINRES) method~\cite{Paige.Saunders1975} and the generalized minimal residual (GMRES) method~\cite{Saad.Y;Schultz.M1986} to solve \eqref{saddlepoint}. It is well-known that the convergence rate of such methods is governed by three parameters: the condition numbers of $A$, $B$ and the relative scaling between them. Generally, $\text{cond}(A) = O(h^{-2})$, therefore, a direct application of these methods will yield a slow convergence and the main difficulty in solving the Stokes equations lies in constructing ``good'' preconditioners for the elliptic operator $A$, which is typically given by the discrete elliptic operator with higher order finite elements. 

A lot of efforts have been devoted on solving the saddle point problems arising from mixed finite element methods for the Stokes equation; see \cite{elman2005finite,Benzi2005,Larin.Reusken2008} and references therein for details. A few efficient multigrid-type solution methods have been proposed for the Stokes equation~\cite{Brandt1979,Wittum1989,Vanka1986,Braess1997a}. In this paper, we focus on the Poisson-based block diagonal and block triangular preconditioners~\cite{Bramble.Pasciak1988,Rusten.T;Winther.R1992}. Recently, parallel version of these preconditioning algorithms attracted a lot of interests due to their efficiency and easiness for implementation; see for example~\cite{May2008,Geenen2009}.

Denote the Schur complement by $S:=B A^{-1} B^T$.
The block triangular factorizations of $F$,
\begin{eqnarray}
 \left ( 
 \begin{array}{cc} A & B^T \\ B & 0 \end{array}
 \right) 
&=&
 \left(
\begin{array}{cc}
I_\vu & 0 \\
B A^{-1} & I_p 
\end{array}
\right)
\left(
\begin{array}{cc}
A & B^T \\
0 & -S 
\end{array}
\right)
\nonumber \\
&=&
\left(
\begin{array}{cc}
I_\vu & 0 \\
B A^{-1} & I_p 
\end{array}
\right)
\left(
\begin{array}{cc}
A & 0 \\
0 & -S 
\end{array}
\right)
\left(
\begin{array}{cc}
I_\vu & A^{-1}B^T \\
0 & I_p 
\end{array}
\right), 
\qquad
\end{eqnarray}
motivate a block upper triangular preconditioner~\cite{Bramble.Pasciak1988}
\begin{equation}\label{precond_t}
Q_t =
\left (
\begin{array}{cc} A & B^T \\ 0 & -S \end{array}
\right)^{-1}
\end{equation}
and an even simpler block diagonal preconditioner~\cite{Rusten.T;Winther.R1992}
\begin{equation}\label{precond_d}
Q_d =
\left (
\begin{array}{cc} A & 0 \\ 0 & -S \end{array}
\right)^{-1}.
\end{equation}

In either $Q_t$ or $Q_d$, it requires to obtain certain approximations to $A^{-1}$ and $S^{-1}$. Since $A$ is a discretization of the Laplace operator, we form an approximation to $A^{-1}$ by applying one multilevel V-cycle to $A$. As for the Schur complement part, we approximate $S$ with the pressure mass matrix $M_p$ and solve $M_p$ equation using the conjugate gradient method with diagonal preconditioner. We note that there are different ways to form the preconditioning actions based on \eqref{precond_t} and \eqref{precond_d}; see~\cite{Benzi2005}. 
%!TEX root = main.tex

%%%%%%%%%%%%%%%%%%%%%%%%%%%%%%%%%%%%%%%
\section{Numerical experiments}\label{sec:numer}
%%%%%%%%%%%%%%%%%%%%%%%%%%%%%%%%%%%%%%%

\newlength{\arrayrulewidthOriginal}
\newcommand{\Cline}[2]{%
  \noalign{\global\setlength{\arrayrulewidthOriginal}{\arrayrulewidth}}%
  \noalign{\global\setlength{\arrayrulewidth}{#1}}\cline{#2}%
  \noalign{\global\setlength{\arrayrulewidth}{\arrayrulewidthOriginal}}}
  
In this section, we test the performance of the proposed Geometric-Algebraic Multigrid (GAMG) method (Algorithm~\ref{gamg} with presmoothing; see Remark~\ref{rem:2-1vsGAMG}) and compare it with the corresponding AMG method with focus on their robustness, efficiency, and parallel weak scalability. For this purpose, we use two simple test problems---one is the 3D Poisson equation and the other is the 3D Stokes equation---on a unit cube with unstructured tetrahedral meshes. We pay special attention to the performance of both methods for higher-order finite element discretizations. 

\subsection{Implementation}

All numerical tests are carried out on the LSSC-I\!I\!I cluster at the State Key Laboratory of Scientific and Engineering Computing (LSEC), Chinese Academy of Sciences. The LSSC-I\!I\!I cluster has 282 computing nodes: Each node has two Intel Quad Core Xeon X5550 2.66GHz processors and 24GB shared memory; all nodes are connected via Gigabit Ethernet and DDR InfiniBand. To make a fair comparison, we use zero right-hand side and start from a random initial guess in our tests. In this section, ``\#It'' denotes the number of iterations, ``DOF'' denotes the degree of freedom, ``CPU'' denotes the computation wall time in seconds, and ``RAM'' denotes the memory usage per processor in MB.  

Our implementation is based on several open-source numerical packages. The finite element discretization for the Poisson equation and the Stokes equation is implemented using PHG~\cite{phg,Zhang2005}. PHG is a toolbox for developing parallel adaptive finite element programs on unstructured tetrahedral meshes and it is under active development at LSEC. PHG is also employed to build the two-level GAMG setting, namely to generate the transfer operators (prolongation and restriction). 

The solvers are implemented using PETSc~\cite{petsc} and BoomerAMG in {\it hypre}~\cite{hypre}. Due to limited space, we only report the results for the Flexible GMRES (FGMRES) method~\cite{ysaad1993,Saad2003} in PETSc and BoomerAMG with the PMIS method~\cite{Sterck2006} for coarsening, the Extended+i+cc method~\cite{Sterck.H;Falgout.R;Nolting.J;Yang.U2008} for interpolation, and the hybrid Gauss-Seidel method for smoothing. This particular setting provides good efficiency and scalability for the linear systems in our numerical tests. For more numerical tests for various choices of iterative methods and different types of AMG methods, we refer to Lee, Leng and Zhang~\cite{Lee2012}. 

\begin{remark}[Number of Smoothing Sweeps]\rm
In multigrid method, there is a trade-off in number of smoothing sweeps used in each cycle. More smoothing sweeps in one cycle will cost more computation time in each multigrid cycle but may reduce total number of cycles. Another consideration about the number of smoothing sweeps is that for linear system that is harder to solve, more smoothing sweeps might lead to better convergence rate. In this paper, we use only one pre and post smoothing sweep in our experiments.
\end{remark}

%[PHG] L.B. Zhang, Parallel Hierarchical Grid, http://lsec.cc.ac.cn/phg/index en.htm/.
%[PETSc] {\sc Balay S., Buschelman K., Eijkhout V., Gropp W.D., Kaushik D., Knepley M.G., McInnes L.C., Smith B.F., and Zhang H.} (2004). PETSc users manual, Technical Report ANL-95/11 - Revision 2.1.5, Argonne National Laboratory.
%[{\it hypre}] {\sc Falgout, R.D., and Yang, U.M.} (2002). hypre: A Library of High Performance Preconditioners, Proceedings of the International Conference on Computational Science-Part III, {\bf 2331}, pp.632-641.
%[PMIS,eicc] De Sterck H, Falgout RD, Nolting J, Yang UM. Distance-two interpolation for parallel algebraic multigrid. Numerical Linear Algebra with Applications 2008; 15:115–139.   
   
\subsection{Test Problem 1---the Poisson equation}\label{test:Poisson}

The GAMG solver for the Poisson equation is implemented as follows: We first pass the linear systems to the FGMRES iterative method of PETSc, then we use one multilevel V-cycle as the preconditioner for FGMRES. In each multilevel cycle, the coarse level problem (corresponding to the $P^{1,0}$ finite element space) is solved with BoomerAMG in {\it hypre}. The AMG preconditioner for the Poisson problem is simple, we pass the linear system to the FGMRES method of PETSc and employ BoomerAMG  as a preconditioner. In both methods, the stopping criteria is that the relative residual is less than $10^{-6}$. 

In Table~\ref{tab:Poisson-hypre} in \S\ref{sec:intro}, we have shown that the AMG method could be very sensitive to the strength threshold $\theta$ when applied to the linear systems arising in higher order finite element discretizations. The strength threshold or strong threshold determines strength of connections, i.e., a point (variable) $i$ strongly depends on $j$ if 
\[
- a_{i,j} > \theta \max_{k \neq i} (- a_{i,k}).
\]
The default value of $\theta$ in BoomerAMG is $0.25$, which usually works well for 2D Laplace operators and a larger value, like $0.5$, is suggested for 3D Laplace operators. But neither of them works for the $P^{4,0}$ finite element discretization. Hence we start by testing both AMG and GAMG methods for various values of $\theta$ and report CPU time and memory consumption in Table~\ref{tab:P2}, \ref{tab:P3}, \ref{tab:P4}. 

We notice that: 
(1) The AMG method under consideration performs reasonably well even for high-order finite element methods. 
(2) However, it is very sensitive to the strength threshold $\theta$ for the fourth order finite element method.
(3) The GAMG method, on the other hand, is very robust with respect to $\theta$. And, in general, it converges faster (from $1.5$ times up to $30$ times, see Table~\ref{tab:Poisson-speedup}) and consumes less memory (by $10\%$ to $50\%$) compared with the AMG method. 
(4) For large 3D linear systems arising from higher-order finite element discretizations, large strength threshold often works much better. In the rest of the comparisons, we will fix the parameter $\theta = 0.8$, which is the best choice for AMG, but not necessarily for GAMG.

%%%%%%%%%%%%%%
%%%%%%%%%%%%%%

\begin{table}[h!!]
\def\temptablewidth{1\textwidth}
\renewcommand{\arraystretch}{1.12}
\centering
\caption{\rm Iteration number, CPU time, and memory usage of the AMG and GAMG preconditioned Krylov subspace method for the 3D Poisson equation on unstructured tetrahedral mesh ($P^{2,0}$ finite element, about 430K DOF per processing core). } \label{tab:P2}
\begin{tabular*}{\temptablewidth}{|r|rrr|rrr|rrr|}
    \cline{1-10}
\multirow{2}{*}{ Method ($\theta$)\;\;} & \multicolumn{3}{c|}{1 Core} &  \multicolumn{3}{c|}{8 Cores} &  \multicolumn{3}{c|}{64 Cores} 
\\ \cline{2-10}
 	             & \#It & CPU & RAM & \#It & CPU & RAM & \#It & CPU & RAM
\\
\rowcolor[gray]{.85} 
AMG\,(0.2)         & 7 & 11.75 & 1141 & 6 & 22.82 & 1424 & 6 & 32.71 & 1320
\\ 
GAMG\,(0.2)	  & 7 & 4.16 & 790 & 7 & 9.25 & 1170 & 7 & 8.57 & 1072
\\ 
\rowcolor[gray]{.85} 
AMG\,(0.4)         & 7 & 8.00 & 1124 & 7 & 18.13 & 1428 & 7 & 25.50 & 1294
\\ 
GAMG\,(0.4)	  & 7 & 4.16 & 790 & 7 & 9.39 & 1170 & 7 & 8.79 & 1073
\\ 
\rowcolor[gray]{.85} 
AMG\,(0.6)         & 8 & 5.92 & 1080 & 8 & 15.06 & 1381 & 8 & 20.29 & 1255
\\
GAMG\,(0.6)	  & 7 & 4.09 & 790 & 7 & 8.55 & 1174 & 7 & 8.19 & 1074
\\ 
\rowcolor[gray]{.85} 
AMG\,(0.8)         & 11 & 5.10 & 1022 & 13 & 14.64 & 1280 & 13 & 17.85 & 1187
\\
GAMG\,(0.8)	  & 9 & 4.35 & 790 & 9 & 9.13 & 1168 & 9 & 9.49 & 1072
\\ \cline{1-10}  
\end{tabular*}
\end{table}

%%%%%%%%%%%%%%%%%%%%%%%%%%%%%%%%%%%%%%%

\begin{table}[h!!]
\def\temptablewidth{1\textwidth}
\renewcommand{\arraystretch}{1.12}
\centering
\caption{\rm Iteration number, CPU time, and memory usage of the AMG and GAMG preconditioned Krylov subspace method for the 3D Poisson equation on unstructured tetrahedral mesh ($P^{3,0}$ finite element, about 650K DOF per processing core).} \label{tab:P3}
\begin{tabular*}{\temptablewidth}{|r|rrr|rrr|rrr|}
    \cline{1-10}
\multirow{2}{*}{ Method ($\theta$)\;\;} & \multicolumn{3}{c|}{1 Core} &  \multicolumn{3}{c|}{8 Cores} &  \multicolumn{3}{c|}{64 Cores} 
\\ \cline{2-10}
 	             & \#It & CPU & RAM & \#It & CPU & RAM & \#It & CPU & RAM
\\
\rowcolor[gray]{.85} 
AMG\,(0.2)         & 8 & 32.24 & 2345 & 8 & 42.98 & 2133 & 7 & 75.84 & 2419
\\   
GAMG\,(0.2)	  & 12 & 13.14 & 1216 & 12 & 14.11 & 1410 & 11 & 19.46 & 1522
\\ 
\rowcolor[gray]{.85} 
AMG\,(0.4)         & 9 & 23.94 & 2278 & 8 & 35.15 & 2219 & 8 & 65.24 & 2507
\\   
GAMG\,(0.4)	  & 12 & 13.16 & 1217 & 12 & 14.94 & 1412 & 11 & 19.99 & 1523
\\ 
\rowcolor[gray]{.85} 
AMG\,(0.6)         & 10 & 16.76 & 1929 & 10 & 27.02 & 2067 & 10 & 48.07 & 2291
\\   
GAMG\,(0.6)	  & 12 & 13.12 & 1222 & 12 & 14.86 & 1412 & 11 & 20.21 & 1523
\\ 
\rowcolor[gray]{.85} 
AMG\,(0.8)         & 13 & 13.74 & 1696 & 14 & 23.29 & 1831 & 15 & 38.54 & 2137
\\   
GAMG\,(0.8)	  & 12 & 13.08 & 1216 & 12 & 14.86 & 1411 & 11 & 20.19 & 1525
\\ \cline{1-10}  
\end{tabular*}
\end{table}

%%%%%%%%%%%%%%%%%%%%%%%%%%%%%%%%%%%%%%%

\begin{table}[h!!]
\def\temptablewidth{1\textwidth}
\renewcommand{\arraystretch}{1.12}
\centering
\caption{\rm Iteration number, CPU time, and memory usage of the AMG and GAMG preconditioned Krylov subspace method for the 3D Poisson equation on unstructured tetrahedral mesh ($P^{4,0}$ finite element, about 460K DOF per processing core).} \label{tab:P4}
\begin{tabular*}{\temptablewidth}{|r|rrr|rrr|rrr|}
    \cline{1-10}
\multirow{2}{*}{ Method ($\theta$)} & \multicolumn{3}{c|}{1 Core} &  \multicolumn{3}{c|}{8 Cores} &  \multicolumn{3}{c|}{64 Cores} 
\\ \cline{2-10}
 	             & \#It & CPU & RAM & \#It & CPU & RAM & \#It & CPU & RAM
\\
\rowcolor[gray]{.85} 
AMG\,(0.2)         & 19 & 42.47 & 1986 & 28 & 73.22 & 2043 & 93 & 438.8 & 2352
\\  
GAMG\,(0.2)	  & 16 & 16.45 & 1023 & 18 & 20.03 & 1117 & 17 & 25.77 & 1534
\\ 
\rowcolor[gray]{.85} 
AMG\,(0.4)         & 16 & 22.12 & 1560 & 36 & 57.40 & 1840 & $>$500 & $>$800 & 2258
\\
GAMG\,(0.4)	  & 16 & 16.41 & 1023 & 18 & 20.05  & 1117 & 17 & 25.10 & 1535
\\ 
\rowcolor[gray]{.85} 
AMG\,(0.6)         & 17 & 16.44 & 1679 & 25 & 33.83 & 1683 & 245 & 356.2 & 2085
\\
GAMG\,(0.6)	  & 16 & 16.42 & 1023 & 18 & 19.07 & 1117 & 17 & 25.64 & 1539
\\ 
\rowcolor[gray]{.85} 
AMG\,(0.8)         & 17 & 13.08 & 1319 & 19 & 22.79 & 1555 & 19 & 36.50 & 1971
\\
GAMG\,(0.8)	  & 16 & 16.47 & 1023 & 18 & 20.93 & 1118 & 17 & 25.23 & 1534
\\ \cline{1-10}  
\end{tabular*}
\end{table}

\begin{table}[h!!]
\renewcommand{\arraystretch}{1.12}
\centering
\caption{\rm Speedup of GAMG compared with AMG for solving the discrete Poisson equation with finite element methods on unstructured tetrahedral meshes (solved using 64 processing cores).} \label{tab:Poisson-speedup}
\begin{tabular}{|cc|c|c|c|c|}
\cline{1-6}
Finite Element & DOF & $\theta = 0.2$ & $\theta = 0.4$ & $\theta = 0.6$ & $\theta = 0.8$ 
\\ 
\cline{1-6}
$P^{2,0}$ & 27M & 3.8 & 2.9 & 2.5 & 1.9
\\
\cline{1-6}
$P^{3,0}$ & 43M & 3.9 & 3.3 & 2.4 & 1.9
\\
\cline{1-6}
$P^{4,0}$ & 31M & 17.0 & 32.0 & 13.9 & 1.5
\\
\cline{1-6}
\end{tabular}
\end{table}

Now, let $L$ to be the number of levels in multilevel hierarchy and level $1$ is the finest level. The operator complexity, $C_{\mathop{op}} := \sum_{l=1}^L \mathop{nnz}(A_l) / \mathop{nnz}(A_1)$, is the ratio between the total number of nonzeros (nnz) of all levels and the number of nonzeros of the finest level. The operator complexity is an important indicator of expense of multigrid type methods, not only for the storage requirements of multilevel preconditioners, but also for computational complexity for applying them. In our experiments, we use the PMIS coarsening strategy and the Extended+i+cc interpolation method in both AMG and GAMG. As summarized in Table~\ref{tab:Poisson-Cop}, we notice that GAMG action is much cheaper than AMG. In fact, for $P^{3,0}$ and $P^{4,0}$, the operator complexity of GAMG is close to $1.0$. This is due to the coarse level space, $P^{1,0}$-finite element space, contains considerably less degree of freedom and gives much less number of nonzeros in the coefficient matrices. In Table~\ref{tab:Poisson-Cop}, the $P^{1,0}$--DOF column gives the degree of freedom for the corresponding coarse level. 

\begin{table}[h!!]
\renewcommand{\arraystretch}{1.15}
\centering
\caption{\rm Operator complexities of AMG and GAMG (PMIS and Extended+i+cc) in single core tests.} \label{tab:Poisson-Cop}
\begin{tabular}{|c|c|c|c|c|c|}
\cline{1-6}
\multirow{2}{*}{ Finite Element } & \multirow{2}{*}{ DOF } &  \multirow{2}{*}{ $P^{1,0}$--DOF } & \multicolumn{3}{c|}{$C_{\mathop{op}}$ } 
\\ 
\cline{4-6}
 &  &  & AMG & $P^{1,0}$--AMG & GAMG
\\ 
\cline{1-6}
$P^{1,0}$ & 429877 & --- & 2.00 & --- & ---
\\
\cline{1-6}
$P^{2,0}$ & 435825 & 59495 & 1.67 & 1.76 & 1.12
\\
\cline{1-6}
$P^{3,0}$ & 736608 & 31087 & 1.56 & 1.96 & 1.02
\\
\cline{1-6}
$P^{4,0}$ & 460899 & 8165 & 1.36 & 1.70 & 1.01
\\
\cline{1-6}
\end{tabular}
\end{table}

In the rest of this subsection, we consider weak scalability of the proposed GAMG method. The maximal number of processing cores is $1024$ (on $128$ nodes) and the maximal degree of freedom in our tests are about $5 \times 10^8$. We notice that both the AMG and GAMG preconditioned Krylov subspace methods yield good optimality and scalability; see Figures~\ref{fig:P2}, \ref{fig:P3}, and \ref{fig:P4}. We note that this comparison was done with the ``good'' parameter ($\theta=0.8$); otherwise, the performance of AMG will deteriorate quickly for $P^{4,0}$ finite element. 

\begin{figure}[h!!]
\centering
 \subfigure{
   \includegraphics[width=0.475\linewidth] {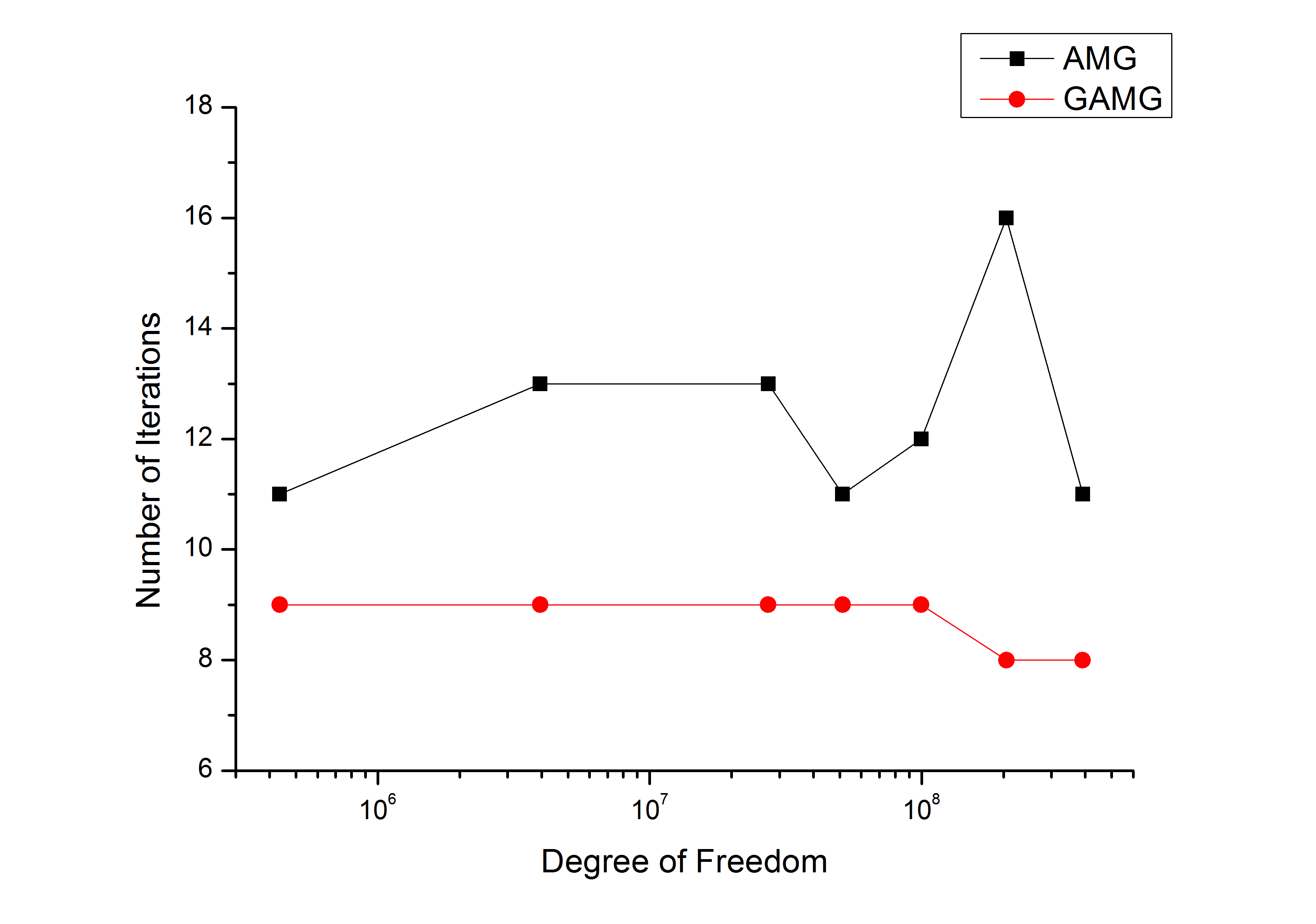}
 }
\subfigure{
   \includegraphics[width=0.475\linewidth] {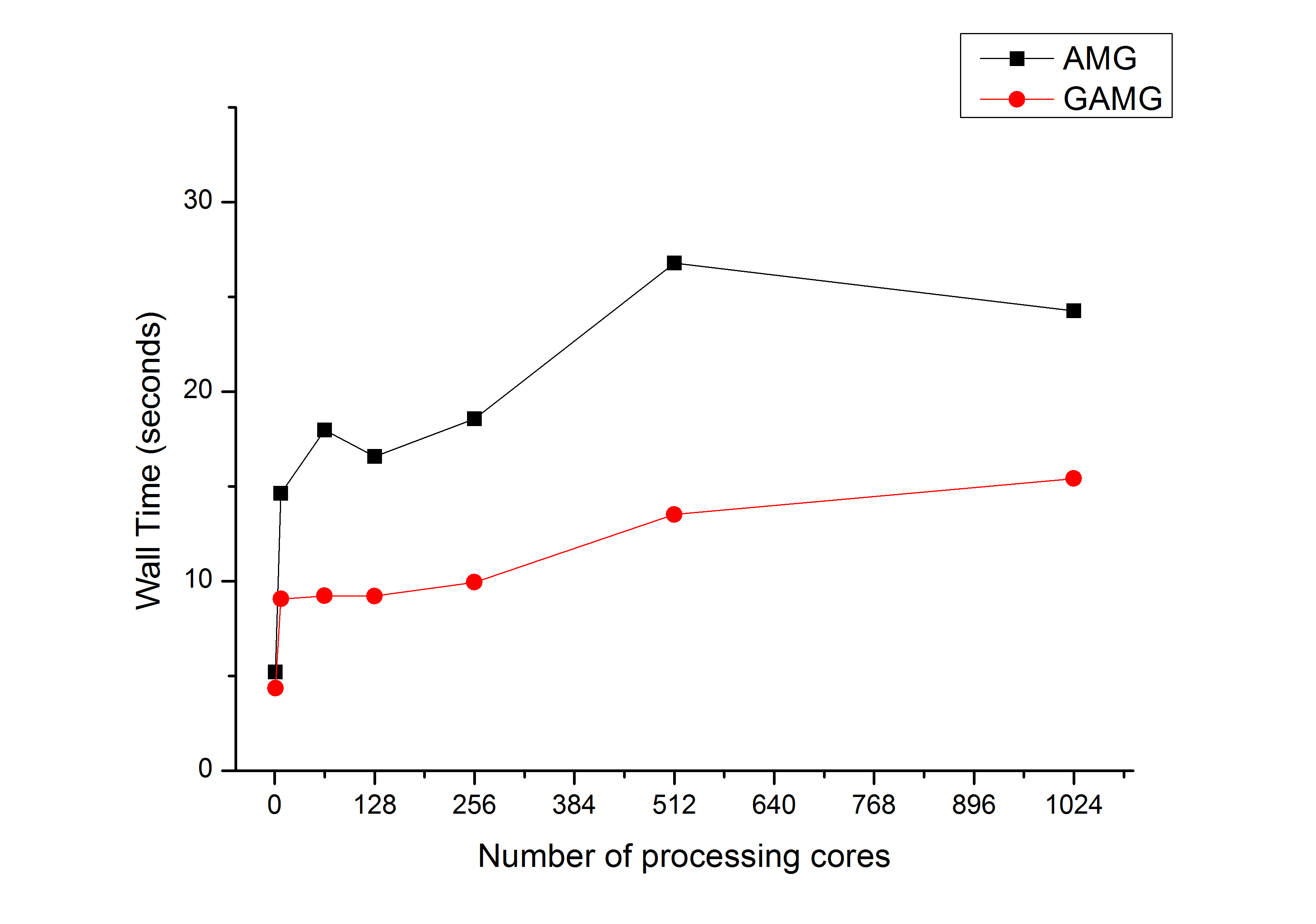}
 }
\caption{Parallel (weak) scalability of AMG and GAMG for $P^{2,0}$ FEM for the Poisson equation.}\label{fig:P2}
\end{figure}

\begin{figure}[h!!] %  figure placement: here, top, bottom, or page
   \centering
 \subfigure{
   \includegraphics[width=0.475\linewidth] {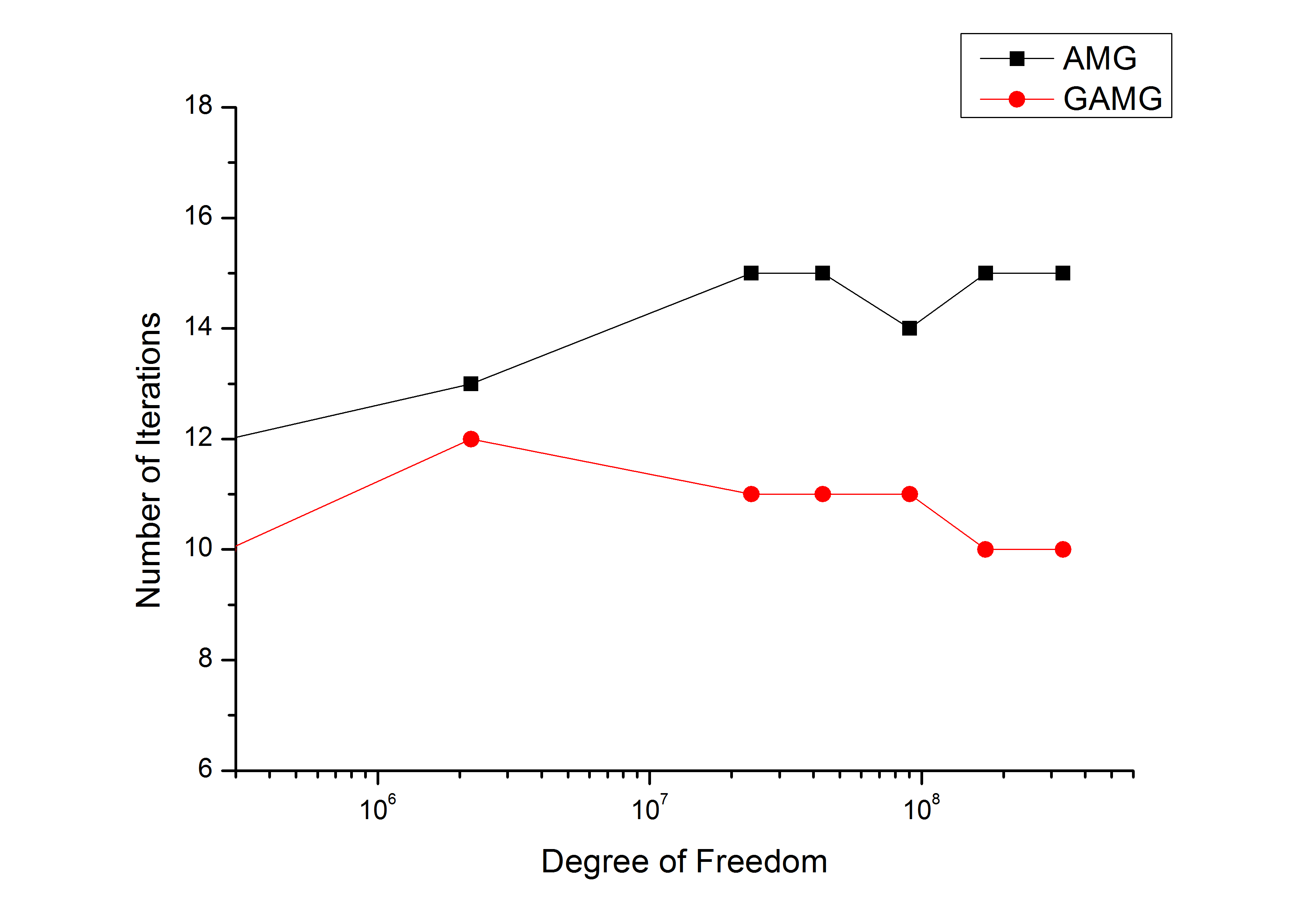}
 }
\subfigure{
   \includegraphics[width=0.475\linewidth] {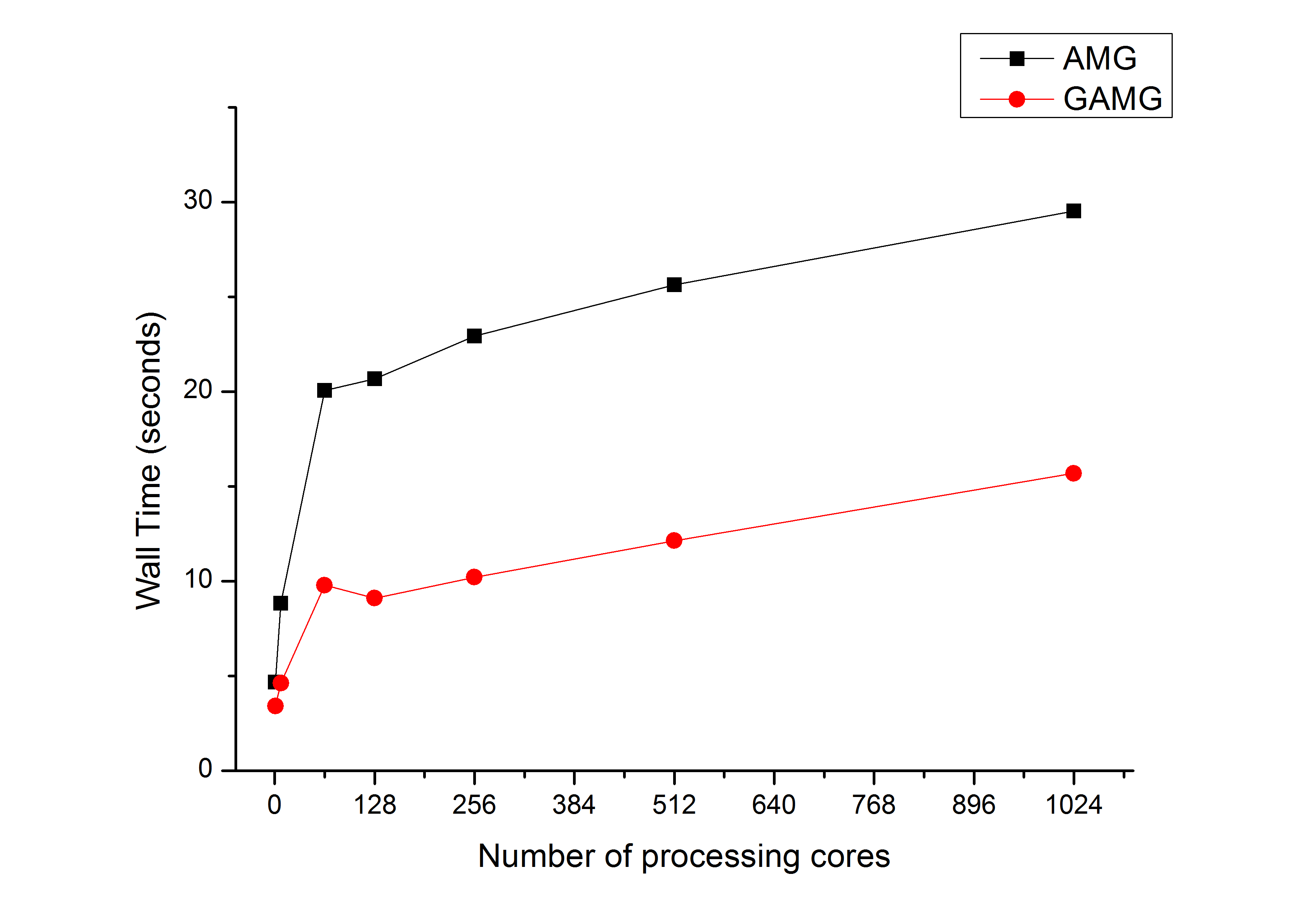}
 }
\caption{Parallel (weak) scalability of AMG and GAMG for $P^{3,0}$ FEM for the Poisson equation.}\label{fig:P3}
\end{figure}

\begin{figure}[h!!] %  figure placement: here, top, bottom, or page
   \centering
 \subfigure{
   \includegraphics[width=0.475\linewidth] {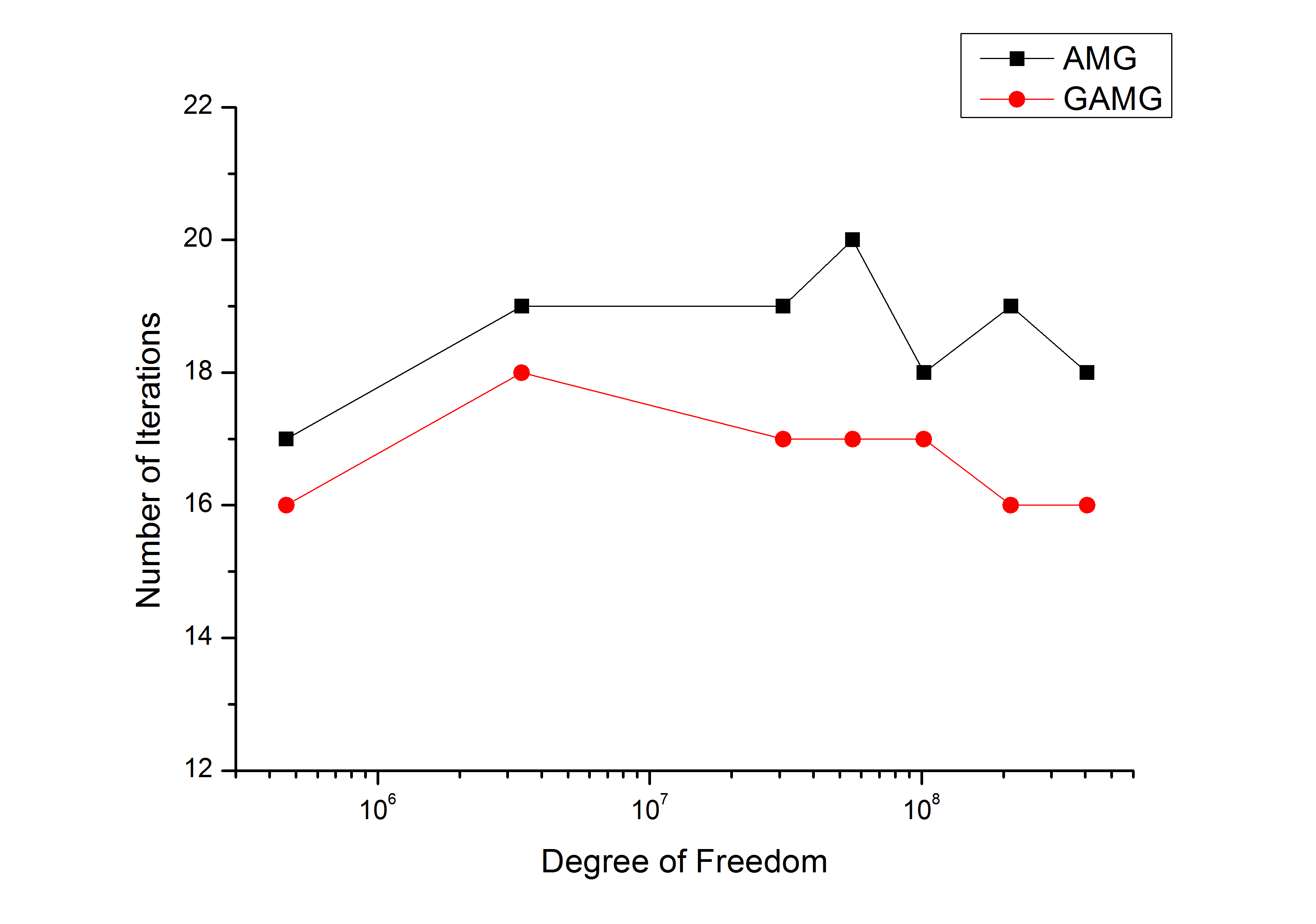}
 }
\subfigure{
   \includegraphics[width=0.475\linewidth] {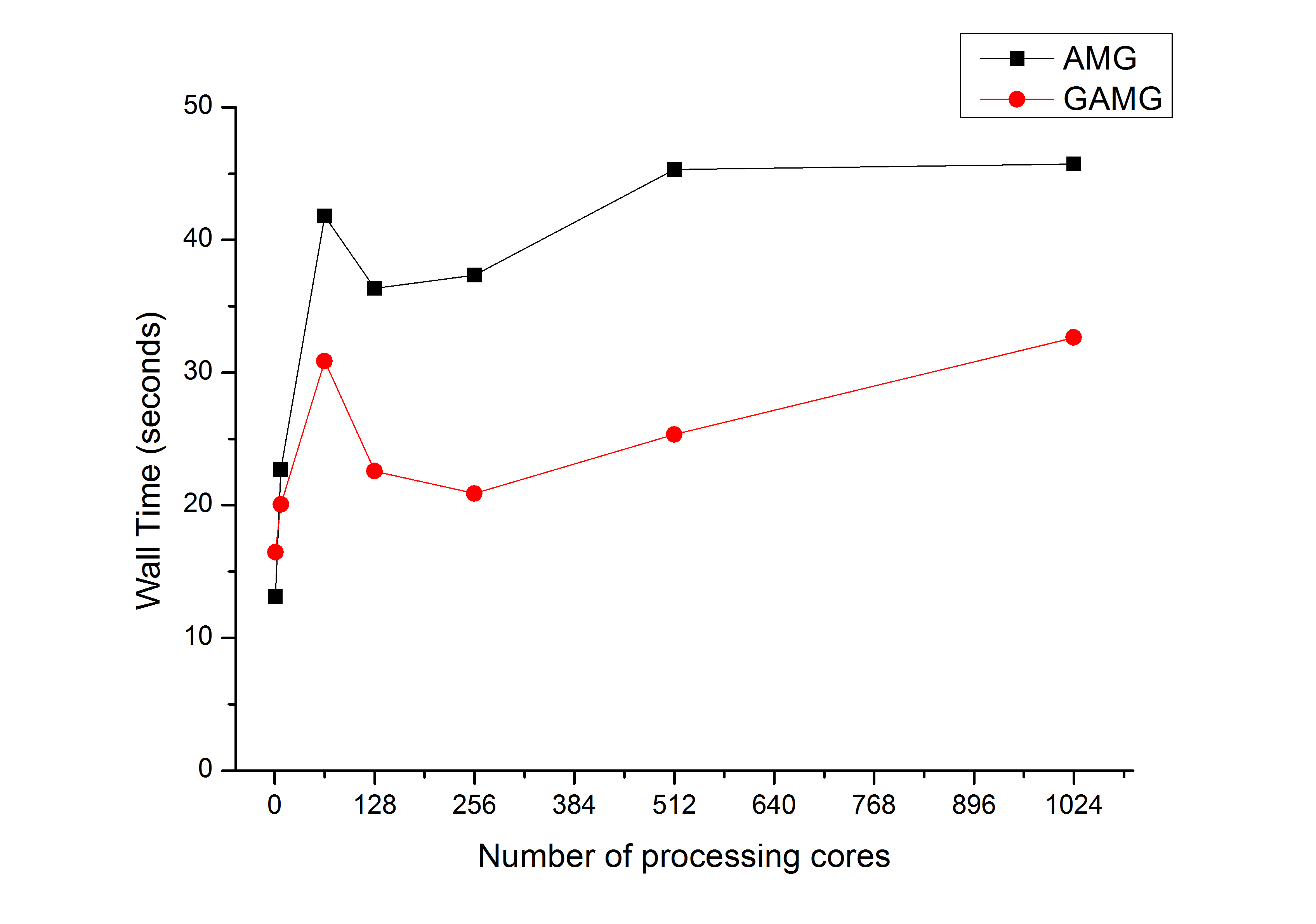}
 }
\caption{Parallel (weak) scalability of AMG and GAMG for $P^{4,0}$ FEM for the Poisson equation.}\label{fig:P4}
\end{figure}

\subsection{Test Problem 2---the Stokes equation}\label{test:Stokes}

In this section, we consider the Hood-Taylor family mixed finite element methods for the steady Stokes flow on a 3D lid driven cavity domain. Again, we test the AMG and GAMG methods with ``good'' strength threshold $\theta=0.8$. We choose to stop the outer FGMRES iteration if the relative residual is smaller than $10^{-8}$. 

Similar to the Poisson solver described in \S\ref{test:Poisson}, the GAMG solver for Stokes test is implemented as follows: We pass the linear systems to the FGMRES solver in PETSc and we apply the block triangular preconditioner $Q_t$ described in \S\ref{sec:mfem} for the FGMRES method. The performance of block diagonal preconditioner $Q_d$ can be found in~\cite{Lee2012}. For the lower-right block (corresponding to the Schur complement), we solve it with the diagonal preconditioned PCG method in PETSc. For the upper-left blocks (corresponding to the Poisson equation), we approximate it with one GAMG V-cycle for the discrete Poisson equation. The AMG solver for Stokes test is similar except that the upper-left block was solved with one AMG V-cycle.

From Figure~\ref{fig:P2-P1}, \ref{fig:P3-P2}, and \ref{fig:P4-P3}, we immediately notice that:
(1) In general, the AMG preconditioner performs reasonably well, even for high-order elements. 
(2) However, the convergence rate of AMG deteriorates with the size of problems and with the order of the mixed finite element. 
(3) The GAMG preconditioner yields much better convergence rate as well as scalability, especially for higher order elements. In particular, the iteration number does not increase as DOF increases. 

\begin{figure}[h!!] %  figure placement: here, top, bottom, or page
   \centering
 \subfigure{
   \includegraphics[width=0.475\linewidth] {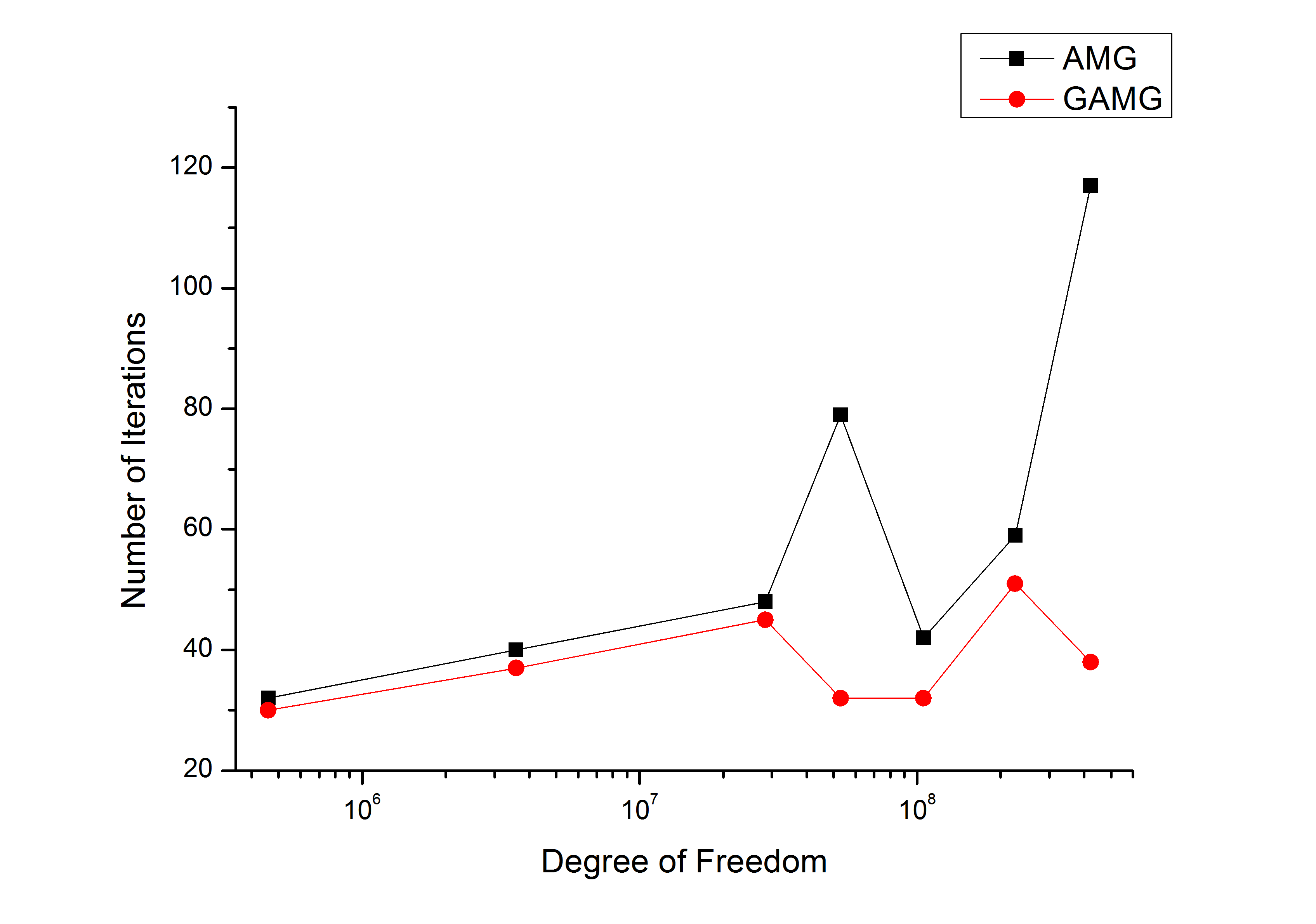}
 }
\subfigure{
   \includegraphics[width=0.475\linewidth] {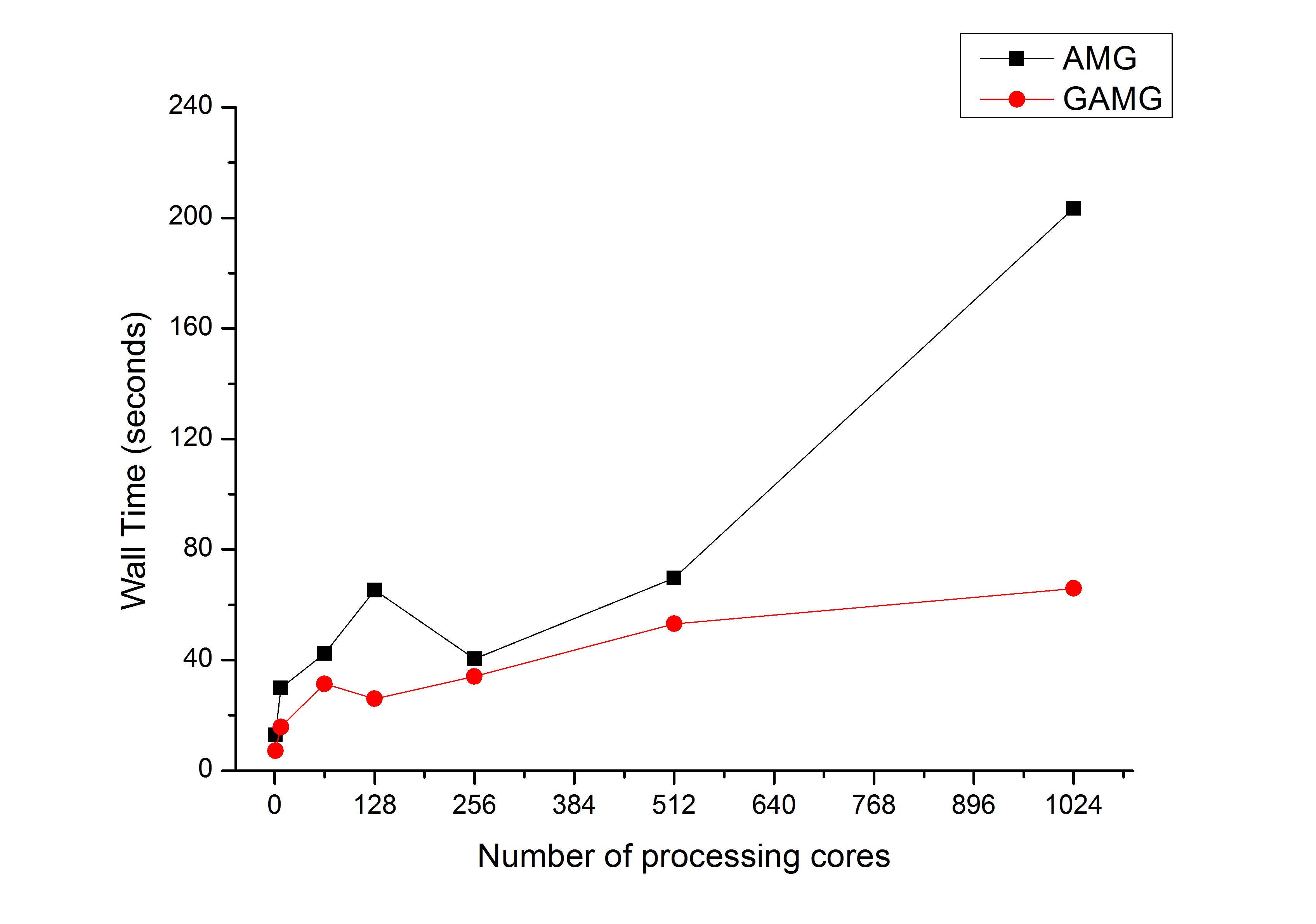}
 }
   \caption{Algorithm optimality and parallel (weak) scalability of AMG and GAMG preconditioned FGMRES methods for the $P^{2,0}-P^{1,0}$ Hood-Taylor mixed finite element.}
   \label{fig:P2-P1}
\end{figure}

\begin{figure}[h!!] %  figure placement: here, top, bottom, or page
   \centering
 \subfigure{
   \includegraphics[width=0.475\linewidth] {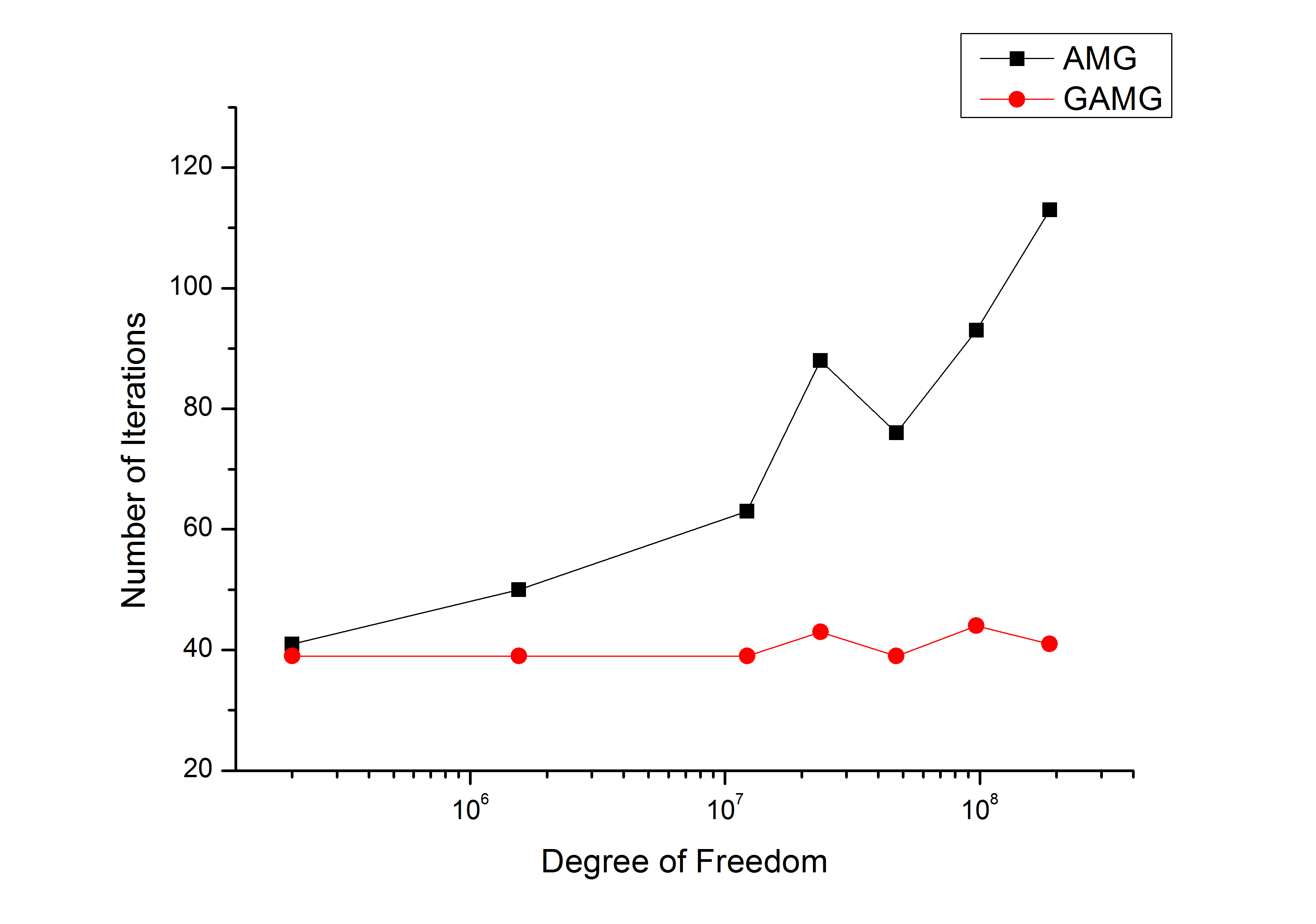}
 }
\subfigure{
   \includegraphics[width=0.475\linewidth] {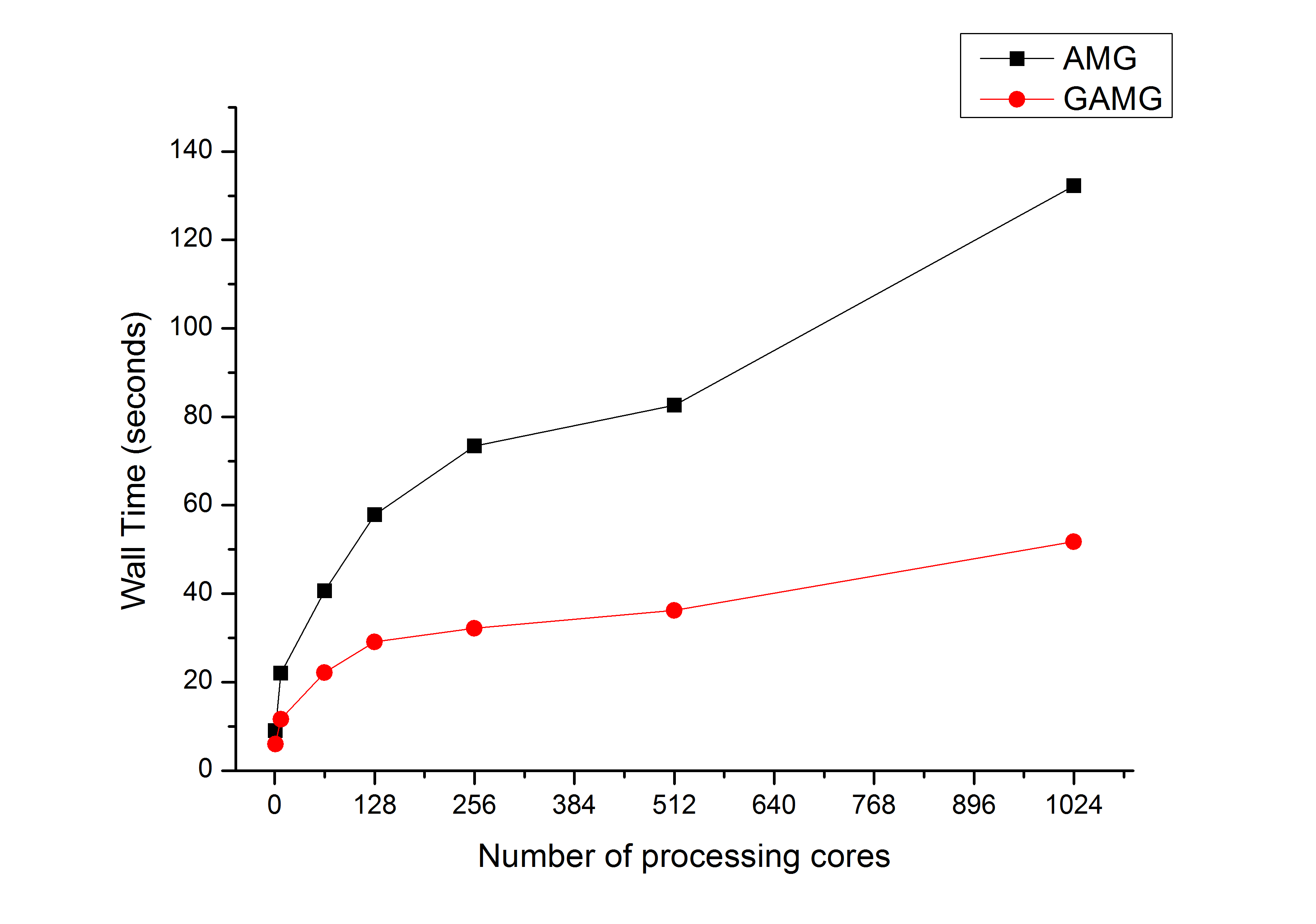}
 }
   \caption{Algorithm optimality and parallel (weak) scalability of AMG and GAMG preconditioned FGMRES methods for the $P^{3,0}-P^{2,0}$ Hood-Taylor mixed finite element.}
   \label{fig:P3-P2}
\end{figure}

\begin{figure}[h!!] %  figure placement: here, top, bottom, or page
   \centering
 \subfigure{
   \includegraphics[width=0.475\linewidth] {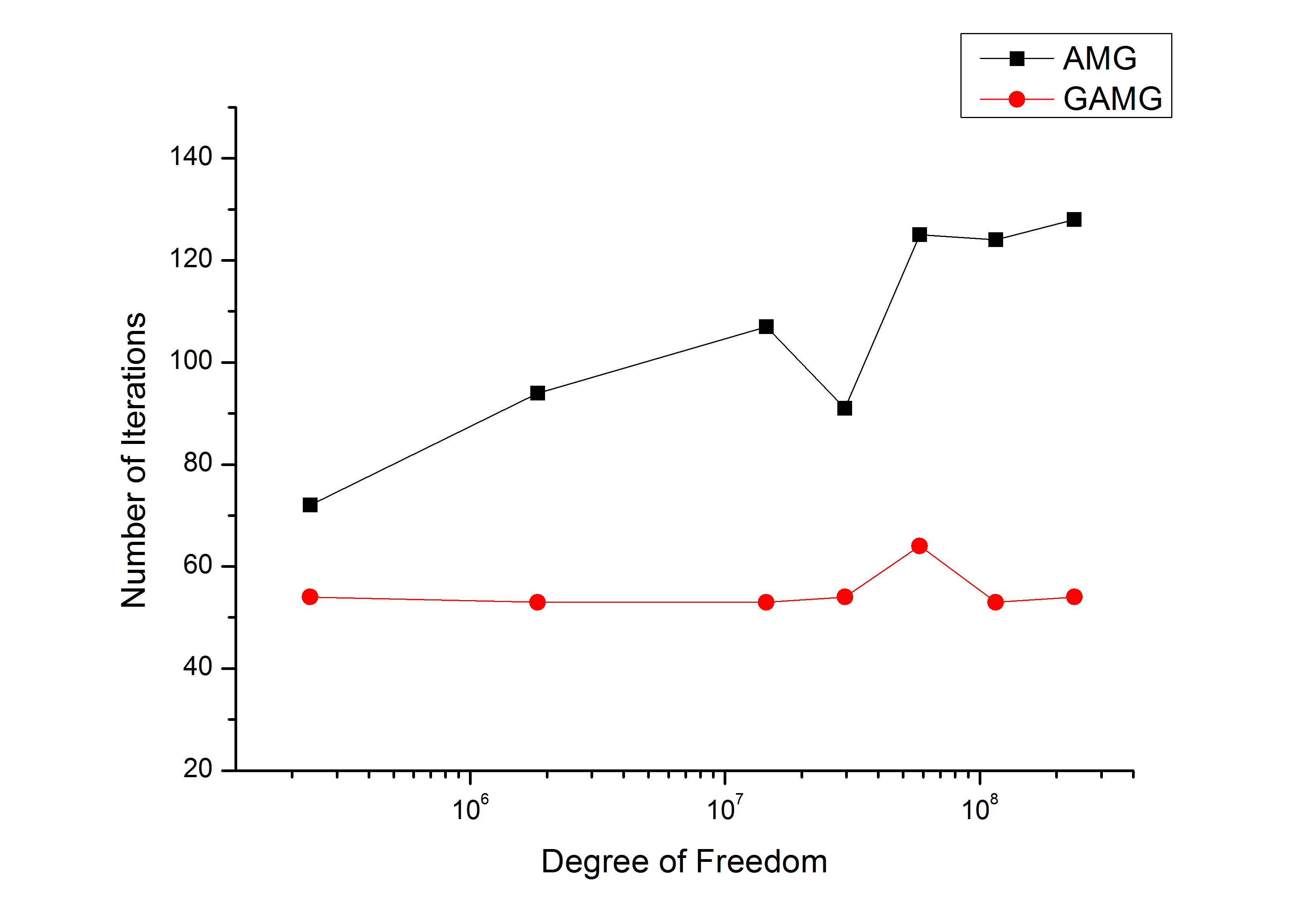}
 }
\subfigure{
   \includegraphics[width=0.475\linewidth] {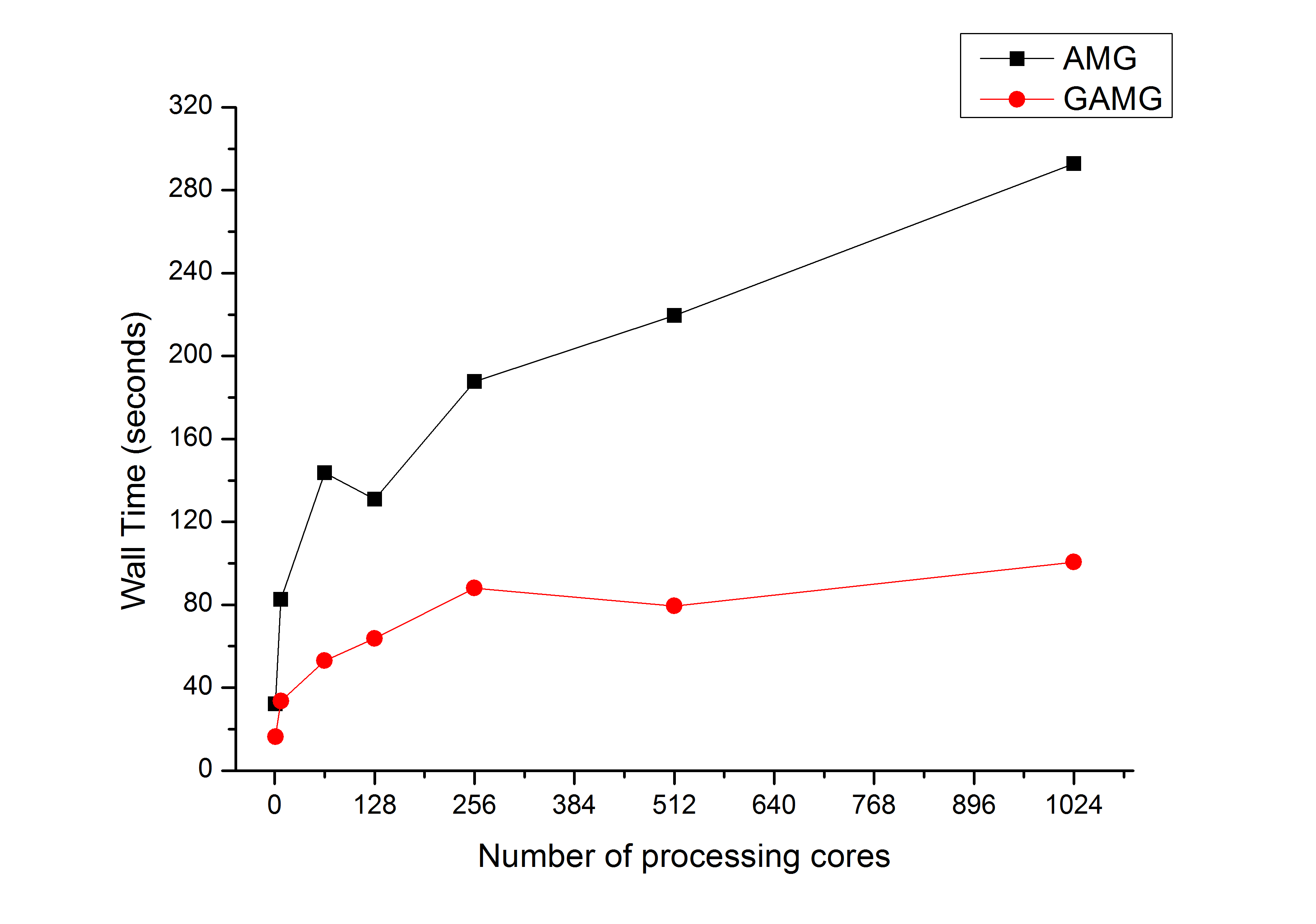}
 }
   \caption{Algorithm optimality and parallel (weak) scalability of AMG and GAMG preconditioned FGMRES methods for the $P^{4,0}-P^{3,0}$ Hood-Taylor mixed finite element.}
   \label{fig:P4-P3}
\end{figure}

\begin{remark}[Intermediate Approximation Spaces]\rm
In this paper, we only use two-level approximation with $P^{1,0}$ finite element space as the coarse level. One can imagine that intermediate (larger) auxiliary spaces could be used to improve performance. Since the convergence rate of the proposed two-level algorithm is optimal (does not deteriorate as size of the problem increases) in our experiments, we decide not to do it in order to keep the implementation as simple as possible.  
\end{remark}

%!TEX root = main.tex

%%%%%%%%%%%%%%%%%%%%%%%%%%%%%%%%%%%%%%
\section{Conclusions}
%%%%%%%%%%%%%%%%%%%%%%%%%%%%%%%%%%%%%%

In this paper, we investigate an auxiliary space preconditioning method for high-order finite element discretizations of the Laplace operator in 3D. 
Modern parallel AMG techniques like PMIS/Extended+i+cc give good parallel scalability, but only if their parameters like strong strength threshold are chosen appropriately. 
On the contrary, the proposed auxiliary space preconditioner is very robust  with respect to coarsening parameters, especially when applied as a building block of Poisson-based solvers for the Stokes equation in 3D.
Furthermore, the proposed method yields smaller operator complexity, which leads to less memory usage and computational complexity.
% In turn, it gives good efficiency and scalability.  
%

%%%%%%%%%%%%%%%%%%%%%%%%%%%%%%%%%%%%%%
\section*{Acknowledgement}
%%%%%%%%%%%%%%%%%%%%%%%%%%%%%%%%%%%%%%

The authors would like to thank Dr. Tzanio Kolev, Prof. Jinchao Xu, and Prof. Lin-Bo Zhang for their insightful comments and suggestions. Lee has been supported in part by NSF-DMS 091528 and Zhang has been supported in part by NSF-DMS 0915153.

%\newpage
%\bibliographystyle{abbrv}
%\bibliography{new,extra}

\begin{thebibliography}{10}

\bibitem{hypre}
{HYPRE} ({H}igh {P}erformance {P}reconditioners).
  http://www.llnl.gov/casc/linear solvers.

\bibitem{petsc}
{PETSC} ({P}ortable, {E}xtensible {T}oolkit for {S}cientiﬁc {C}omputation).
  http://www-unix.mcs.anl.gov/petsc.

\bibitem{phg}
{PHG (Parallel Hierarchical Grid)}. http://lsec.cc.ac.cn/phg/.

\bibitem{Axelsson1994}
O.~Axelsson.
\newblock {\em Iterative solution methods}.
\newblock Cambridge University Press, Cambridge, 1994.

\bibitem{Bakera}
A.~H. Baker, R.~D. Falgout, T.~V. Kolev, and U.~M. Yang.
\newblock {Scaling hypre's multigrid solvers to 100000 cores}.
\newblock In M.~B. et~Al., editor, {\em High Performance Scientific Computing:
  Algorithms and Applications}, volume 27344.

\bibitem{Benzi2005}
M.~Benzi, G.~H. Golub, and J.~Liesen.
\newblock {Numerical solution of saddle point problems}.
\newblock {\em Acta Numerica}, 14:1--137, May 2005.

\bibitem{Boffi1997}
D.~Boffi.
\newblock {T}hree-{D}imensional {F}inite {E}lement {M}ethods for the {S}tokes
  problems.
\newblock {\em SIAM J. Numer. Anal.}, 34:664--670, 1997.

\bibitem{Braess1997a}
D.~Braess and R.~Sarazin.
\newblock {An efficient smoother for the Stokes problem}.
\newblock {\em Applied Numerical Mathematics}, 23(1):3--19, Feb. 1997.

\bibitem{Bramble.J1993}
J.~H. Bramble.
\newblock {\em Multigrid Methods}, volume 294 of {\em Pitman Research Notes in
  Mathematical Sciences}.
\newblock Longman Scientific \& Technical, Essex, England, 1993.

\bibitem{Bramble.Pasciak1988}
J.~H. Bramble and J.~E. Pasciak.
\newblock A preconditioning technique for indefinite systems resulting from
  mixed approximations of elliptic problems.
\newblock {\em Math. Comp.}, 50(181):1--17, 1988.

\bibitem{Brandt.A1984}
A.~Brandt.
\newblock {\em Multigrid techniques: 1984 guide with applications to fluid
  dynamics}.
\newblock GMD--Studien Nr. 85. Gesellschaft f{\"u}r {M}athematik und
  {D}atenverarbeitung, St. Augustin, 1984.

\bibitem{Brandt.A1986}
A.~Brandt.
\newblock Algebraic multigrid theory: {T}he symmetric case.
\newblock {\em Appl. Mathematics of Computation.}, 19:23--56, 1986.

\bibitem{Brandt.A2002}
A.~Brandt.
\newblock {\em Multiscale scientific computation: Review 2001}.
\newblock T.J.Barth et.al eds. Springer. Springer, 2002.

\bibitem{Brandt1979}
A.~Brandt and N.~Dinar.
\newblock {Multigrid Solutions to Elliptic Flow Prob- lems}.
\newblock In S.~Parter, editor, {\em Numerical Methods for Partial Differential
  Equations}, pages 53----147. 1979.

\bibitem{Brandt.A;McCormick.S;Ruge.J1984}
A.~Brandt, S.~F. McCormick, and J.~W. Ruge.
\newblock Algebraic multigrid ({AMG}) for sparse matrix equations.
\newblock In D.~J. Evans, editor, {\em Sparsity and Its Applications}.
  Cambridge University Press, Cambridge, 1984.

\bibitem{Brannick.Falgout2010}
J.~J. Brannick and R.~D. Falgout.
\newblock Compatible relaxation and coarsening in algebraic multigrid.
\newblock {\em SIAM J. Sci. Comput.}, 32(3):1393--1416, 2010.

\bibitem{Brezzi.F;Fortin.M1991}
F.~Brezzi and M.~Fortin.
\newblock {\em Mixed and hybrid finite element methods}.
\newblock Springer-Verlag, 1991.

\bibitem{elman2005finite}
H.~Elman, D.~Silvester, and A.~Wathen.
\newblock {\em Finite elements and fast iterative solvers: with applications in
  incompressible fluid dynamics}.
\newblock Oxford University Press, USA, 2005.

\bibitem{Falgout2006}
R.~D. Falgout.
\newblock An introduction to algebraic multigrid.
\newblock {\em Computing in Science and Engineering}, 8:24--33, 2006.

\bibitem{Falgout.Jones.ea2005}
R.~D. Falgout, J.~E. Jones, and U.~Meier~Yang.
\newblock Pursuing scalability for hypre's conceptual interfaces.
\newblock {\em ACM Trans. Math. Software}, 31(3):326--350, 2005.

\bibitem{Geenen2009}
T.~Geenen, M.~ur~Rehman, S.~P. MacLachlan, G.~Segal, C.~Vuik, A.~P. van~den
  Berg, and W.~Spakman.
\newblock {Scalable robust solvers for unstructured FE geodynamic modeling
  applications: Solving the Stokes equation for models with large localized
  viscosity contrasts}.
\newblock {\em Geochemistry Geophysics Geosystems}, 10(9):1--12, Sept. 2009.

\bibitem{Girault.V;Raviart.P1986}
V.~Girault and P.~A. Raviart.
\newblock {\em Finite element methods for {Navier}--{Stokes} equations}.
\newblock Springer-Verlag, Berlin, 1986.
\newblock Theory and algorithms.

\bibitem{Griebel.M1994}
M.~Griebel.
\newblock Multilevel algorithms considered as iterative methods on semidefinite
  systems.
\newblock {\em SIAM Journal on Scientific and Statistical Computing},
  15:547--565, 1994.

\bibitem{Griebel.Metsch.Oeltz.Schweitzer:2005}
M.~Griebel, B.~Metsch, D.~Oeltz, and M.~A. Schweitzer.
\newblock Coarse grid classification: A parallel coarsening scheme for
  algebraic multigrid methods.
\newblock {\em Numerical Linear Algebra with Applications}, 13(2--3):193--214,
  2006.
\newblock Also available as SFB 611 preprint No. 225, Universit\"at Bonn, 2005.

\bibitem{Hackbusch.W1985}
W.~Hackbusch.
\newblock {\em Multigrid Methods and Applications}, volume~4 of {\em
  Computational Mathematics}.
\newblock Springer--Verlag, Berlin, 1985.

\bibitem{Heys.Manteuffel.ea2005}
J.~Heys, T.~Manteuffel, S.~McCormick, and L.~Olson.
\newblock Algebraic multigrid for higher-order finite elements.
\newblock {\em Journal of Computational Physics}, 204(2):520 -- 532, 2005.

\bibitem{Hiptmair.Xu2007}
R.~Hiptmair and J.~Xu.
\newblock Nodal auxiliary space preconditioning in {{H}}(curl) and {{H}}(div)
  spaces.
\newblock {\em SIAM J. Numer. Anal.}, 45(6):2483--2509 (electronic), 2007.

\bibitem{Taylor1973}
P.~Hood and C.~Taylor.
\newblock A numerical solution of the {N}avier-{S}tokes equations using the
  finite element technique.
\newblock {\em Internat. J. Comput. \& Fluids}, 1:73--100, 1973.

\bibitem{Larin.Reusken2008}
M.~Larin and A.~Reusken.
\newblock A comparative study of efficient iterative solvers for generalized
  {S}tokes equations.
\newblock {\em Numer. Linear Algebra Appl.}, 15(1):13--34, 2008.

\bibitem{Lee.Y;Wu.J;Xu.J2002a}
Y.~Lee, J.~Wu, J.~Xu, and L.~Zikatanov.
\newblock A sharp convergence estimate of the method of subspace corrections
  for singular systems.
\newblock {\em Mathematics of Computation}, 77 (262):831--850, 2008.

\bibitem{Lee2012}
Y.-J. Lee, W.~Leng, and C.-S. Zhang.
\newblock {Numerical study of a parallel geometric-algebraic multigrid method
  for the Stokes equation}.
\newblock In preparation.

\bibitem{lee;wu;xu;zikatanov2008c}
Y.-J. Lee, J.~Wu, J.~Xu, and L.~Zikatanov.
\newblock Convergence analysis on iterative methods for semidefinite systems.
\newblock {\em J. Comp. Math.}, 26:797--815, 2008.

\bibitem{Lee2011a}
Y.-J. Lee, J.~Xu, and C.-S. Zhang.
\newblock {Stable Finite Element Discretizations for Viscoelastic Flow Models}.
\newblock In {\em Handbook of Numerical Analysis}, volume XVI. 2011.

\bibitem{May2008}
D.~A. May and L.~Moresi.
\newblock {Preconditioned iterative methods for Stokes flow problems arising in
  computational geodynamics}.
\newblock {\em Physics of the Earth and Planetary Interiors}, 171(1-4):33--47,
  Dec. 2008.

\bibitem{Muresan.Notay2008}
A.~Muresan and Y.~Notay.
\newblock Analysis of aggregation-based multigrid.
\newblock {\em SIAM Journal on Scientific Computing}, 30(2):1082--1103, 2008.

\bibitem{Paige.Saunders1975}
C.~C. Paige and M.~A. Saunders.
\newblock Solutions of sparse indefinite systems of linear equations.
\newblock {\em SIAM J. Numer. Anal.}, 12(4):617--629, 1975.

\bibitem{Ruge1987}
J.~W. Ruge and K.~St{\"u}ben.
\newblock {\em Algebraic multigrid}, volume~3 of {\em Frontiers Appl. Math.},
  pages 73--130.
\newblock SIAM, Philadelphia, PA, 1987.

\bibitem{Rusten.T;Winther.R1992}
T.~Rusten and R.~Winther.
\newblock A preconditioned iterative method for saddle point problems.
\newblock {\em SIAM J. Matrix Anal. Appl.}, 13(3):887--904, 1992.

\bibitem{ysaad1993}
Y.~Saad.
\newblock A flexible inner-outer preconditioned {GMRES} algorithm.
\newblock {\em SIAM J. Sci. Comput.}, 14:461--469, 1993.

\bibitem{Saad2003}
Y.~Saad.
\newblock {\em Iterative methods for sparse linear systems}.
\newblock Society for Industrial and Applied Mathematics, Philadelphia, PA,
  second edition, 2003.

\bibitem{Saad.Y;Schultz.M1986}
Y.~Saad and M.~H. Schultz.
\newblock {GMRES}: {A} generalized minimal residual algorithm for solving
  nonsymmetric linear systems.
\newblock {\em SIAM Journal on Scientific and Statistical Computing},
  7:856--869, 1986.

\bibitem{Scott.L;Vogelius.M1985}
L.~R. Scott and M.~Vogelius.
\newblock Conforming finite element methods for incompressible and nearly
  incompressible continua.
\newblock In {\em Large-scale computations in fluid mechanics, Part 2 (La
  Jolla, Calif., 1983)}, volume~22 of {\em Lectures in Appl. Math.}, pages
  221--244. Amer. Math. Soc., Providence, RI, 1985.

\bibitem{Shu.S;Sun.D;Xu.J2006}
S.~Shu, D.~Sun, and J.~Xu.
\newblock An algebraic multigrid method for higher order finite element
  discretizations.
\newblock {\em Computing}, 77(4):347--377, 2006.

\bibitem{Sterck.H;Falgout.R;Nolting.J;Yang.U2008}
H.~D. Sterck, R.~D. Falgout, J.~W. Nolting, and U.~M. Yang.
\newblock Distance-two interpolation for parallel algebraic multigrid.
\newblock {\em Numerical Linear Algebra with Applications}, 15(2-3):115--139,
  2008.

\bibitem{Sterck2006}
H.~D. Sterck, U.~M. Yang, and J.~J. Heys.
\newblock {Reducing complexity in parallel algebraic multigrid
  preconditioners}.
\newblock {\em SIAM J. Matrix Anal. Appl.}, 27(4):1019--1039, 2006.

\bibitem{Stuben.K2000}
K.~St{\"u}ben.
\newblock An introduction to algebraic multigrid.
\newblock In U.~Trottenberg, C.~W. Oosterlee, and A.~Sch{\"u}ller, editors,
  {\em Multigrid}, pages 413--532. Academic Press, London, 2000.

\bibitem{Temam.R1977}
R.~Temam.
\newblock {\em Navier-Stokes Equations}.
\newblock North Holland, 1977.

\bibitem{Trottenberg2001}
U.~Trottenberg, C.~W. Oosterlee, and A.~Sch{\"u}ller.
\newblock {\em Multigrid}.
\newblock Academic Press Inc., San Diego, CA, 2001.
\newblock With contributions by A. Brandt, P. Oswald and K. St\"uben.

\bibitem{Vanek.P;Mandel.J;Brezina.M1996}
P.~Van{\v e}k, J.~Mandel, and M.~Brezina.
\newblock Algebraic multigrid by smoothed aggregation for second and fourth
  order elliptic problems.
\newblock {\em Computing}, 56:179--196, 1996.

\bibitem{Vanka1986}
S.~P. Vanka.
\newblock Block-implicit multigrid solution of {N}avier-{S}tokes equations in
  primitive variables.
\newblock {\em J. Comput. Phys.}, 65(1):138--158, 1986.

\bibitem{Wittum1989}
G.~Wittum.
\newblock {Multi-Grid Methods for Stokes and Navier-Stokes Equations
  Transforming Smoothers: Algorithm and Numerical Results}.
\newblock {\em Numerische Mathematik}, 54:543--563, 1989.

\bibitem{Xu.J1992a}
J.~Xu.
\newblock Iterative methods by space decomposition and subspace correction.
\newblock {\em SIAM Review}, 34:581--613, 1992.

\bibitem{Xu.J1996a}
J.~Xu.
\newblock The auxiliary space method and optimal multigrid preconditioning
  techniques for unstructured meshes.
\newblock {\em Computing}, 56:215--235, 1996.

\bibitem{Xu2010}
J.~Xu.
\newblock Fast poisson-based solvers for linear and nonlinear pdes.
\newblock In R.~Bhatia, editor, {\em Proceedings of the international congress
  of mathematicians}, volume~4. World Sci. Publ., 2010.

\bibitem{Xu.J;Zikatanov.L2003b}
J.~Xu and L.~Zikatanov.
\newblock On an energy minimazing basis in algebraic multigrid methods.
\newblock {\em Computing and visualization in sciences}, 2004.
\newblock submitted.

\bibitem{Zhang2005}
L.-B. Zhang.
\newblock {A Parallel Algorithm for Adaptive Local Refinement of Tetrahedral
  Meshes Using Bisection}.
\newblock {\em Numer. Math. Theory Methods Appl.}, 2:65--89, 2009.

\end{thebibliography}

\end{document}